\documentclass[11pt,paper=a4]{article}
 \usepackage{indentfirst, latexsym,bm}
 \usepackage{amsmath}
 \usepackage{pifont}
 \usepackage{amsfonts}
 \usepackage{mathrsfs}
 \usepackage{array}
 \usepackage{multirow} 
 \usepackage{graphicx}
 \usepackage{subfigure}
 \usepackage{picinpar}
 \usepackage{setspace}
 \usepackage[textsize=footnotesize]{todonotes}
 \RequirePackage[colorlinks,citecolor=blue,urlcolor=blue,linkcolor=blue]{hyperref}
 \usepackage[top=2cm,bottom=2cm, outer=2cm, inner=2cm]{geometry}
 
 \usepackage{caption}
 \usepackage{subcaption}
 \usepackage[labelfont={bf,small},textfont={small}]{caption}
 
 \RequirePackage[numbers]{natbib}
 \usepackage{enumitem} 
 
 \usepackage{booktabs}
 \usepackage{authblk}
 \usepackage{float}
 \usepackage{threeparttable}
 \usepackage{mathrsfs,amsfonts,amsmath}
 
 \usepackage{color}
 
 \makeatletter
 \def\namedlabel#1#2{\begingroup
 	#2%
 	\def\@currentlabel{#2}%
 	\phantomsection\label{#1}\endgroup
 }
 \makeatother

 \numberwithin{figure}{section}

 \newcommand\email[1]{\href{mailto:#1}{ \nolinkurl{#1}}}

 \newtheorem{theorem}{Theorem}[section]
 \newtheorem{definition}[theorem]{Definition}
 \newtheorem{lemma}[theorem]{Lemma}
 \newtheorem{corollary}[theorem]{Corollary}
 \newtheorem{proposition}[theorem]{Proposition}
 \newtheorem{remark}[theorem]{Remark}
 \newtheorem{condition}[theorem]{Condition}
 \newtheorem{example}{Example}[section]

 \def\blemma{\begin{lemma}}\def\elemma{\end{lemma}}
 \def\bproposition{\begin{proposition}}\def\eproposition{\end{proposition}}
 \def\ttheorem{\begin{theorem}}\def\etheorem{\end{theorem}}
 \def\bcorollary{\begin{corollary}}\def\ecorollary{\end{corollary}}
 \def\bremark{\begin{remark}}\def\eremark{\end{remark}}
 \def\bcondition{\begin{condition}}\def\econdition{\end{condition}}

 \def\benumerate{\begin{enumerate}}\def\eenumerate{\end{enumerate}}
 \def\bitemize{\begin{itemize}}\def\eitemize{\end{itemize}}

 \def\beqlb{\begin{eqnarray}}\def\eeqlb{\end{eqnarray}}
 \def\beqnn{\begin{eqnarray*}}\def\eeqnn{\end{eqnarray*}}
 \def\ar{\!\!\!&}

 \def\proof{\noindent{\it Proof.~~}}\def\qed{\hfill$\Box$\medskip}

\begin{document} 
  \title{\bf  \Large Asymptotic Results for Spectrally Positive Compound Poisson Processes.}
 
 \author{Zhi-Hao Cui\footnote{School of Mathematical Sciences, Nankai University, China; email: cuizh.math@gmail.com} \quad  \  Hao Wu\footnote{School of Mathematical Sciences, Nankai University, China; email: wuhao.math@outlook.com.} 
 }  
 \maketitle
 \begin{abstract} 
Finite excursions away from zero of a spectrally positive compound Poisson process with a negative drift  can always be decomposed into two parts lying above and below zero, respectively. 
This paper is concerned with the asymptotic relationships among the lengths and heights of these two parts. 
Our results state that both their lengths and heights are asymptotically strongly dependent and exhibit a scale symmetry.
 	\bigskip
 	
 	\noindent {\it MSC 2020 subject classifications:} Primary 60G51, 60F17; secondary 60B10.
 	
 	\smallskip
 	
 	\noindent  {\it Keywords and phrases:} compound Poisson process, regular variation,  conditional limit theorem.

 \end{abstract}

  \section{Introduction and main results} 
\label{Sec.Introduction}
\setcounter{equation}{0}
Consider a spectrally positive compound Poisson process $X=\{ X_t:t\geq 0 \}$ with negative drift and Laplace exponent given by 
\beqlb\label{eqn2.01}
\psi(\lambda):=\log\mathbf{E}[\exp\{-\lambda X_1\}]=b\lambda+\int_0^\infty(e^{-\lambda x}-1 )\,\nu(dx), \quad \lambda \geq 0,
\eeqlb
for some constant $b>0$ and finite measure $\nu(dx)$ on $(0,\infty)$. 
The function $ \psi$ is infinitely differentiable and strictly convex on $(0, \infty)$. Its  right inverse is defined by
$
\Phi(q)=\sup \{\lambda \geq 0: \psi(\lambda)=q\} $ for $ q\geq 0.
$  For $x\in\mathbb{R}$, we write $\tau_{x}^-$ and $\tau_{x}^+$ for the first passage times of $X$ 
into $(-\infty,x)$ and $(x,\infty)$, respectively, and define 
\beqnn
T_0:=\inf\{t>0:X_t=0\}
\eeqnn
the first hitting time of zero. Here we make the convention that $\inf \emptyset =\infty$. 
Additionally, we also consider the local maximum and local minimum during the time $[0,T_0]$ defined by 
\beqnn
H_0:=\sup_{0\le s\le T_0} X_s
\quad\text{and}\quad
-\widehat{H}_0:=\inf_{0\le s\le T_0} X_s.
\eeqnn

Notice that the measure of finite excursions  of $X$ away from $0$ is proportional to the law of $(X_t, 0\leq t\leq T_0)$ conditioned on $T_0<\infty$; see Figure~\ref{Figure01}. 
In this paper,  we tempt to understand the asymptotic dependence of  the two parts of excursions lying above and below $0$ by studying the asymptotics of the joint distribution of $(\tau_0^+,T_0 - \tau_0^+,H_0,\widehat{H}_0)$. 

\begin{center}
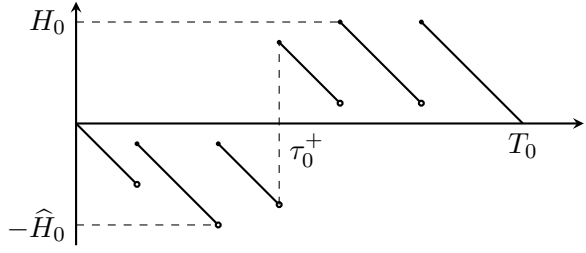

	\begin{tikzpicture}[scale=0.27, x=1cm, y=1cm, >=stealth]
		
		\draw[->, line width=0.8pt] (0,-6) -- (0,6);
		\draw[->, line width=0.8pt] (0,0) -- (25,0);
		
		\def\taup{10}
		\def\Tzero{22}
		
		\draw[dashed] (0,5) -- (13,5);
		\draw[dashed] (0,-5) -- (7,-5);
		
		\node[left] at (0,5) {$H_0$};
		\node[left] at (0,-5) {$-\widehat{H}_0$};
		
		\draw[line width=0.8pt, domain=0:3]
		plot (\x, {-\x});
		
		\draw[line width=0.8pt, fill=white] (3,-3) circle (0.12);
		\fill (3,-1) circle (0.12);
		
		\draw[line width=0.8pt, domain=3:7]
		plot (\x, {2-\x});
		
		\draw[line width=0.8pt, fill=white] (7,-5) circle (0.12);
		\fill (7,-1) circle (0.12);
		
		\draw[line width=0.8pt, domain=7:10]
		plot (\x, {6-\x});
		
		\draw[line width=0.8pt, fill=white] (10,-4) circle (0.12);
		\draw[dashed] (10,-4) -- (10,4);
		\fill (10,4) circle (0.12);
		
		\node[below right] at (\taup,0) {$\tau_0^+$};
		
		\draw[line width=0.8pt, domain=10:13]
		plot (\x, {14-\x});
		
		\draw[line width=0.8pt, fill=white] (13,1) circle (0.12);
		\fill (13,5) circle (0.12);
		
		\draw[line width=0.8pt, domain=13:17]
		plot (\x, {18-\x});
		
		\draw[line width=0.8pt, fill=white] (17,1) circle (0.12);
		\fill (17,5) circle (0.12);
		
		\draw[line width=0.8pt, domain=17:\Tzero]
		plot (\x, {\Tzero-\x});
		
		\node[below] at (\Tzero,0) {$T_0$};
		
	\end{tikzpicture}
	
	\captionof{figure}{\small A typical excursion away from $0$ that can be decomposed into two parts lying above and below zero.}\label{Figure01}
	
\end{center}

\textit{\textbf {The recurrent  case.}}  In this case, we have $\mathbf{E}[X_1]=0$ and $\tau_0^{+}<\infty$ a.s. To obtain the asymptotic results for the  two parts of excursion,  we impose the following standard condition on $X$: 
\begin{condition}\label{Assumption01}
	The process $X$ belongs to the domain of attraction of a stable law without centering, that is, there exists a positive function $\boldsymbol{c}$ on $\mathbb{R}_+$ such that 
	\beqnn
	X_t/\boldsymbol{c}(t) \ar\overset{\rm d}\to\ar Y_1, \quad \mbox{as $t\to\infty$},
	\eeqnn
	where $Y$ is a strictly stable process with index $\alpha\in (0,2)$ and negativity parameter $\rho= \mathbf{P} \big(Y_1\leq 0\big)\in(0,1)$. 
\end{condition}

For $\kappa\in\mathbb{R}$, we write $r \in \mathrm{RV}^\infty_\kappa$ (resp. $r\in \mathrm{RV}^0_\kappa$) if $r(tx)/r(t)\to x^\kappa$ as $ t\to\infty $  (resp. as $t\to 0+$) for any $x>0$. 
By   \cite[p.~218]{Bertoin1996}, Proposition~6 and Theorem~VI.14  in \cite[p.~192 and p.~169]{Bertoin1996}, Condition~\ref{Assumption01} implies  that 
\beqnn
\alpha\rho=1,\quad \alpha\in(1,2)
\quad \mbox{and}\quad
\Phi(q)\in \mathrm{RV}_{\rho}^{0}.
\eeqnn 
Moreover,  we  write 	 $\overline{\nu}(h):=\nu[h,\infty)$ for the \textit{tail-function} of $\nu(dx)$, which belongs to $\mathrm{RV}_{-\alpha}^{\infty}$.
Our first result studies  the asymptotic behavior of the tail probabilities for the  lengths $(\tau_0^+,T_0 - \tau_0^+)$ and heights $(H_0,\widehat{H}_0)$.
\begin{theorem}\label{Mainresult02}
	The following  hold as $t,h\to\infty$: 
	\begin{enumerate}
		\item[(1)]   
		$\mathbf{P}(  \tau_0^{+}>t)=\mathbf{P}(  T_0-\tau_0^{+}>t)
		\sim  \dfrac{1}{b\Gamma(\rho)}\cdot \dfrac{1}{t\Phi(1/t)} ;$ \label{main1}
		\item[(2)] 	$\mathbf{P}(  H_0>h) =\mathbf{P}(  \widehat{H}_0>h) 
		\sim b^{-1}\,\Gamma(2-\alpha)\, \Gamma(\alpha-1)\cdot  h\overline{\nu}(h)$.
	\end{enumerate}
\end{theorem}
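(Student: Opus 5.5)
The plan is to prove the two equalities in law by a time-reversal argument. The two asymptotics then follow from explicit fluctuation identities for the spectrally negative process $-X$ combined with Karamata's Tauberian theorem.

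\emph{Equalities in law.} Since $X$ is recurrent with $b>0$, we have $T_0<\infty$ a.s. I will use the duality lemma: for fixed $t$, the path $(X_t-X_{(t-s)-})_{0\le s\le t}$ has the same law as $(X_s)_{0\le s\le t}$. The plan is to transfer this to the excursion $(X_s)_{0\le s\le T_0}$, either under the excursion measure of $X$ away from $0$ or via a Palm-type argument at the hitting time $T_0$. The conclusion to aim for is that the reversed–negated path $\widetilde X_s:=-X_{(T_0-s)-}$, $0\le s\le T_0$, has the same law as $(X_s)_{0\le s\le T_0}$. This map sends the positive part of the excursion to a negative part and vice versa. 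It also preserves the negative drift: just before $T_0$ the path descends linearly to $0$, so $\widetilde X$ leaves $0$ downwards. Consequently $\tau_0^+\circ\widetilde X=T_0-\tau_0^+$ and $H_0\circ\widetilde X=\widehat H_0$, which gives both equalities. Making this reversal rigorous at the random time $T_0$ is the step I expect to be the main obstacle. As a fallback, I would check the identity $\tau_0^+\overset{d}{=}T_0-\tau_0^+$ directly through Laplace transforms, as follows. By the strong Markov property at $\tau_0^+$, and since $X$ has no negative jumps,
\[
\mathbf E[e^{-q(T_0-\tau_0^+)}]=\mathbf E\big[e^{-\Phi(q)X_{\tau_0^+}}\big].
\]
I would then compare this with the transform of $\tau_0^+$ using the standard joint identity
\[
\mathbf E\big[e^{-q\tau_0^+-\lambda X_{\tau_0^+}}\big]=1-\frac{q-\psi(\lambda)}{b(\Phi(q)-\lambda)}.
\]

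\emph{Asymptotics of lengths.} Setting $\lambda=0$ in the identity above gives $1-\mathbf E[e^{-q\tau_0^+}]=q/(b\Phi(q))$. Equivalently,
\[
\int_0^\infty e^{-qt}\mathbf P(\tau_0^+>t)\,dt=\frac{1}{b\Phi(q)},
\]
and this is in $\mathrm{RV}^0_{-\rho}$ because $\Phi\in\mathrm{RV}^0_\rho$. The function $t\mapsto\mathbf P(\tau_0^+>t)$ is monotone, so Karamata's Tauberian theorem together with the monotone density theorem yields
\[
\mathbf P(\tau_0^+>t)\sim\frac{1}{\Gamma(\rho)}\cdot\frac{1/t}{b\,\Phi(1/t)},
\]
which is (1).

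\emph{Asymptotics of heights.} Since $X$ has no negative jumps, $\{\widehat H_0>h\}$ is the event that $X$ creeps down to $-h$ before first jumping above $0$. For $Y=-X$ (spectrally negative, with Laplace exponent $\psi$ and bounded variation), this is the two-sided exit event that $Y$, started at $0$, reaches $h$ before entering $(-\infty,0)$. Shifting by $h$ and using the scale function $W$ (defined by $\int_0^\infty e^{-\lambda x}W(x)\,dx=1/\psi(\lambda)$, with $W(0)=1/b$) gives
\[
\mathbf P(\widehat H_0>h)=\frac{W(0)}{W(h)}=\frac{1}{bW(h)}.
\]
Next, since $\mathbf E X_1=0$ we have $b=\int x\,\nu(dx)$, so $\psi(\lambda)=\int_0^\infty(e^{-\lambda x}-1+\lambda x)\,\nu(dx)$. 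Integrating by parts twice and applying Karamata's theorem gives
\[
\psi(\lambda)\sim\frac{\Gamma(2-\alpha)}{\alpha-1}\,\overline\nu(1/\lambda)\quad\text{as }\lambda\to0+.
\]
Hence $1/\psi\in\mathrm{RV}^0_{-\alpha}$. Because $W$ is increasing, the Tauberian and monotone density theorems give
\[
W(h)\sim\frac{1}{\Gamma(\alpha)\,h\,\psi(1/h)}\sim\frac{\alpha-1}{\Gamma(\alpha)\Gamma(2-\alpha)\,h\,\overline\nu(h)}.
\]
Substituting this into $1/(bW(h))$ and using $\Gamma(\alpha)/(\alpha-1)=\Gamma(\alpha-1)$ gives (2).
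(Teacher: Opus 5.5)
\textbf{Lengths.} Your treatment of the lengths is essentially the paper's. The equality $\tau_0^+\overset{d}{=}T_0-\tau_0^+$ via your Laplace-transform fallback is Corollary~\ref{cor3.3} with one argument set to $0$. The asymptotic comes from Karamata's Tauberian and monotone density theorems applied to $q/(b\Phi(q))$.

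\textbf{Heights: asymptotic.} Here you take a genuinely different and shorter route. You use the exact identity $\mathbf P(\widehat H_0>h)=W(0)/W(h)=1/(bW(h))$ from the two-sided exit problem started at $0$. You then get the precise asymptotic of $W$ from $\psi(\lambda)\sim\frac{\Gamma(2-\alpha)}{\alpha-1}\overline\nu(1/\lambda)$ by a Tauberian argument. The paper instead integrates $\mathbf P_x(H_0>h)=1-W(h-x)/W(h)$ against the overshoot density $b^{-1}\overline\nu(x)$ and splits into three regions. That route needs only $W\in\mathrm{RV}^\infty_{\alpha-1}$, plus the excursion-measure bound for small $x$, and recovers the constant from a beta-type integral. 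Your computation checks out, including the constant, and avoids that splitting.

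\textbf{The gap.} The equality $\mathbf P(H_0>h)=\mathbf P(\widehat H_0>h)$ is not established. You only compute the law of $\widehat H_0$. The transfer to $H_0$ rests entirely on the time reversal $\widetilde X\overset{d}{=}X$ on $[0,T_0]$, which you leave unproved, and your fallback covers only the lengths. The fixed-time duality lemma does not by itself give reversibility at the hitting time $T_0$.

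The reversal statement is true, and the tool that proves it is the path decomposition at $\tau_0^+$ behind Lemma~\ref{lem3.2}. Conditionally on $(X_{\tau_0^+-},X_{\tau_0^+})=(-x,y)$, the reversed pre-$\tau_0^+$ path and the post-$\tau_0^+$ path are independent copies of $X$ started from $x$ and $y$ and killed at $0$. Moreover, the law $b^{-1}dx\,\nu(x+dy)$ of $(x,y)$ is invariant under $(x,y)\mapsto(y,x)$: given $x+y=z$, $x$ is uniform on $(0,z)$.

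Alternatively, you can fill the gap directly in the spirit of your fallback. By the strong Markov property at $\tau_0^+$, and setting $q=0$ in your joint identity, $X_{\tau_0^+}$ has density $b^{-1}\overline\nu(y)$. Using $\int_0^\infty\overline\nu=b$, this gives
\[
\mathbf P(H_0>h)=b^{-1}\int_0^\infty\overline\nu(y)\Big(1-\frac{W(h-y)}{W(h)}\Big)dy=1-\frac{1}{bW(h)}\int_0^h\overline\nu(y)W(h-y)\,dy.
\]
The Laplace transform of $h\mapsto\int_0^h\overline\nu(y)W(h-y)\,dy$ is $\frac{b\lambda-\psi(\lambda)}{\lambda\psi(\lambda)}=\frac{b}{\psi(\lambda)}-\frac{1}{\lambda}$. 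Hence this convolution equals $bW(h)-1$, and $\mathbf P(H_0>h)=1/(bW(h))$ exactly. This closes part (2) with your own asymptotic for $W$.
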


The next theorem provides an exact description on the asymptotic dependence between  ${T_0-\tau_0^+}$ and ${\tau_0^+}$, as well as between ${H_0}$ and ${\widehat{H}_0}$. 
To formulate it, we need to define two non-negative functions $f$ and $g$ on $(0,\infty)$ by
\beqlb\label{eqn1.2}
f(x):=\frac{x^{\rho-1}}{\Gamma(\rho)}
+\frac{1}{\Gamma(\rho)} \frac{\sin(\pi \rho)}{\pi}
\int_0^x 
\frac{(x-u)^{\rho-1} (1+u) u^{\rho-1}}{u^{2\rho}-2u^\rho \cos(\pi \rho)+1}du
\eeqlb
and 
\beqlb\label{eqn1.3}
g(x):= \frac{\alpha-1}{\Gamma(2-\alpha)\Gamma(\alpha-1)}\int_{0}^{1}s^{\alpha-2}ds\int_{0}^{1}t^{\alpha-2}(x+1-xt-s)^{1-\alpha}dt.
\eeqlb

\begin{theorem}\label{MainThm01}
	For any $a>0$,	the following  hold as $t,h\to\infty$:
	\begin{enumerate}
		\item[(1)]   
		$
		\mathbf{P}\big(T_0-\tau_{0}^{+}>at \,|\, \tau_{0}^{+}>t\big)=	\mathbf{P}\big(\tau_{0}^{+}>at \,|\, T_0-\tau_{0}^{+}>t\big)\to a^{\rho-1}+1-\Gamma(\rho) f(a)\in (0,1);
		$
		\item[(2)] $
		\mathbf{P}\big(H_0>ah \,|\, \widehat{H}_0>h\big)	=\mathbf{P}\big(\widehat{H}_0>ah \,|\, H_0>h\big)	\to g(a)\in(0,1)$.
	\end{enumerate}
\end{theorem}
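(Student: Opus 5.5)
We work with the joint laws of $(\tau_0^+, T_0-\tau_0^+)$ and of $(H_0,\widehat H_0)$ under $\mathbf{P}$ (started at $0$, so $X$ first drifts down linearly). The starting point is the decomposition at the first passage time $\tau_0^+$. Before $\tau_0^+$ the path lies below zero. At $\tau_0^+$ the process jumps from an undershoot $-U:=X_{\tau_0^+-}$ to an overshoot $V:=X_{\tau_0^+}>0$. Because $X$ has no negative jumps and drifts at rate $-b$ between jumps, after $\tau_0^+$ we have $T_0-\tau_0^+ = \tau_{-V}^-\circ\theta_{\tau_0^+}$. In other words, it is the first passage below level $0$ started from $V$.

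By the strong Markov property, conditionally on $V=v$ this is distributed as $\tau_{-v}^-$ for $X$ from $0$, whose Laplace transform is $e^{-v\Phi(q)}$. Similarly $H_0$ is the supremum of $X$ started at $V$ before hitting $0$. Hence $\mathbf{P}(H_0>h\mid V=v)=W(h-v)/W(h)$ for $v<h$, and equals $1$ for $v\geq h$, where $W$ is the scale function. On the other side, $\widehat H_0=U\vee(\text{deepest point before }\tau_0^+)$, and this equals $U$ essentially, since before $\tau_0^+$ the path only moves down continuously or jumps. Indeed, the minimum before $\tau_0^+$ is attained at $\tau_0^+-$ or at an earlier pre-jump point. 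I would use the fluctuation identity for the triple (time, undershoot, overshoot) at $\tau_0^+$:
\[
\mathbf{E}\big[e^{-q\tau_0^+};\,-X_{\tau_0^+-}\in du,\,X_{\tau_0^+}\in dv\big]=\frac1b\,e^{-\Phi(q)u}\,\nu(u+dv)\,du,
\]
which follows from the compensation formula together with the $q$-potential of $X$ killed at $\tau_0^+$. The same formula with the running infimum incorporated gives $\widehat H_0$. I will state it with the minimum tracked: the density in $(u,\text{depth})$ comes from the potential measure of $X$ killed on exiting $[-y,0]$, expressed through $W$.

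For part (1), I will compute the joint double Laplace transform
\[
\mathbf{E}\big[e^{-q\tau_0^+-p(T_0-\tau_0^+)}\big]=\frac1b\int_0^\infty\!\!\int_0^\infty e^{-\Phi(q)u-\Phi(p)v}\nu(u+dv)\,du.
\]
This expression is symmetric in $(q,p)$ after the change $w=u+v$, since it equals $\frac1b\int\nu(dw)\,\frac{e^{-\Phi(q)w}-e^{-\Phi(p)w}}{\Phi(p)-\Phi(q)}$. That symmetry proves the equalities of laws claimed in both theorems. Writing it as $\frac{1}{b}\frac{\psi$-type differences$}{\Phi(p)-\Phi(q)}$, and using $\int(1-e^{-\lambda w})\nu(dw)=b\lambda-\psi(\lambda)$, the transform becomes
\[
1-\frac{q-p}{b(\Phi(q)-\Phi(p))}\quad\text{(with } \psi(\Phi(q))=q).
\]
I then take $q=s/t$ and $p=r/t$ with $t\to\infty$. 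Since $\Phi\in\mathrm{RV}^0_\rho$, the quantity $(1-\text{transform})\cdot t\Phi(1/t)$ converges to $b^{-1}(s-r)/(s^\rho-r^\rho)$. A two-dimensional Tauberian argument (monotone densities plus the univariate result of Theorem~\ref{Mainresult02}(1)) identifies the limit of $t\Phi(1/t)\,\mathbf{P}(\tau_0^+>t,\,T_0-\tau_0^+>at)$. The expected value is $b^{-1}\Gamma(\rho)^{-1}\big(1+a^{\rho-1}-\Gamma(\rho)f(a)\big)$, which is the inclusion–exclusion of the two marginals minus $\mathbf{P}(\tau_0^+\le t\text{ or }\dots)$, with $f$ the limiting density functional. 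Identifying this with $f$ is the main obstacle. I would first invert $(s-r)/(s^\rho-r^\rho)$ explicitly in $r$ by a Stieltjes/Bromwich contour. The branch cut of $r^\rho$ produces the kernel $\frac{\sin\pi\rho}{\pi}\frac{u^{\rho-1}}{u^{2\rho}-2u^\rho\cos\pi\rho+1}$, and a subsequent inversion in $s$ gives the convolution with $(x-u)^{\rho-1}/\Gamma(\rho)$ seen in \eqref{eqn1.2}.

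For part (2), heights enter through space rather than time. I condition on $U=u$ and $V=v$ with density $b^{-1}\nu(u+dv)du$ (the $q=0$ case), and use $\mathbf{P}(\widehat H_0>h)$ as arising from the depth reached before the crossing jump. The joint tail $\mathbf{P}(H_0>ah,\widehat H_0>h)$ becomes an integral of $\nu$ against $W$-ratios. In the recurrent case $W(x)\to1/\psi'(0+)$-type limits fail, but $W(x)\sim x/(b\cdot\ldots)$ is replaced by the renewal-type asymptotics $W(h-v)/W(h)\to(1-v/h)^{\alpha-1}$. This holds because $W\in\mathrm{RV}^\infty_{\alpha-1}$, which is equivalent to Condition~\ref{Assumption01}. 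After scaling $u=hs'$, $v=h\cdot(\cdot)$, we use $\nu$ regular variation, $\nu(hdw)\approx \alpha\overline\nu(h)w^{-\alpha-1}dw$, together with dominated convergence justified by Potter bounds. This yields a double integral. The changes of variables $s,t\in(0,1)$ giving the factors $s^{\alpha-2}$ and $t^{\alpha-2}$ come from the two $W$-ratio derivatives. Dividing by Theorem~\ref{Mainresult02}(2) gives $g(a)$. Finally, $g(a)\in(0,1)$ and $a^{\rho-1}+1-\Gamma(\rho)f(a)\in(0,1)$ follow from the limits being probabilities that are non-degenerate, strictly positive by the integral representation, and strictly less than $1$ by monotonicity in $a$.
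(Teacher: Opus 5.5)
There are genuine gaps in part (2) and in the claims that the limits lie in $(0,1)$. Your overall architecture matches the paper's: decomposition at $\tau_0^+$ into undershoot $U$ and overshoot $V$, the transform $1-\frac{q-p}{b(\Phi(q)-\Phi(p))}$, a two-dimensional Tauberian theorem with monotone densities, and scale-function ratios with regular variation for the heights.

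\textbf{The treatment of $\widehat H_0$.} You claim $\widehat H_0$ ``equals $U$ essentially''; it does not. Before $\tau_0^+$ the path may make several upward jumps that stay below $0$, and the infimum is the lowest pre-jump point among all of them, so $\widehat H_0>U$ with positive probability. Taken literally, your claim gives $\mathbf{P}(\widehat H_0>h)=\mathbf{P}(U>h)=b^{-1}\int_h^\infty\overline\nu(x)\,dx\sim h\overline\nu(h)/(b(\alpha-1))$. This contradicts Theorem~\ref{Mainresult02}(2), because $\Gamma(2-\alpha)\Gamma(\alpha-1)=\pi/\sin(\pi(\alpha-1))>1/(\alpha-1)$. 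It would also break the claimed equality in law of $H_0$ and $\widehat H_0$: $U$ and $V$ both have density $b^{-1}\overline\nu$, but $\mathbf{P}(H_0>h)>\mathbf{P}(V>h)$. The ingredient you actually need is
\[
\mathbf{P}\big(\widehat H_0\le y,\ U\in du,\ V\in dv\big)=b^{-1}\,\frac{W(y-u)}{W(y)}\,\mathbf{1}_{\{u<y\}}\,du\,\nu(u+dv).
\]
That is, conditionally on $(U,V)=(u,v)$, $\widehat H_0$ and $H_0$ are independent with the laws of $H_0$ under $\mathbf{P}_u$ and under $\mathbf{P}_v$. Your killed-potential remark does yield this: the potential density at $-u$ of $X$ started at $0$ and killed on leaving $[-y,0]$ is $W(0)W(y-u)/W(y)$, with $W(0)=1/b$. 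It is exactly what the paper gets from Bertoin's time reversal in Lemma~\ref{lem3.2}. You must state it explicitly and discard the claim $\widehat H_0\approx U$.

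\textbf{The conditional tail of $H_0$.} Your formula has the complement wrong. By \eqref{eqn2.5}, $\mathbf{P}(H_0>h\mid V=v)=1-W(h-v)/W(h)$ for $v<h$. Your expression $W(h-v)/W(h)$ is decreasing in $v$, tends to $1$ as $v\downarrow0$, and is inconsistent with your value $1$ for $v\ge h$.

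\textbf{Your alternative routes.} Once corrected, they are legitimate and differ from the paper's.
\begin{itemize}
\item In (1), iterated Bromwich inversion replaces the paper's reduction, via homogeneity of $\lambda$, to a one-variable generalized Stieltjes transform. The residue at $0$ gives $x^{\rho-1}/\Gamma(\rho)$ and the branch cut gives the convolution term. However, the kernel is $k(u)=\frac{(1+u)u^{\rho-1}}{u^{2\rho}-2u^{\rho}\cos(\pi\rho)+1}$; you dropped the factor $1+u$. You also still need uniqueness plus monotonicity of the density to pass from transforms to a pointwise identity.
\item In (2), a single scaling limit over the whole quadrant works. Write the limiting exit probability as $(\alpha-1)\int_0^{x\wedge1}(1-s)^{\alpha-2}ds$ and apply Fubini. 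The numerator of \eqref{eqn3.101}, divided by $h\overline\nu(h)$, then tends to $(\alpha-1)\int_0^1\!\int_0^1 s^{\alpha-2}t^{\alpha-2}(a+1-at-s)^{1-\alpha}\,ds\,dt$, faster than the paper's seven regions. The interchange is not plain dominated convergence, though, because the integrator is the measure $\nu(h\,dz)/\overline\nu(h)$. You need an integration by parts onto $\overline\nu$, or a Helly--Bray argument plus control of the boundary strips (the role of $A_1$, $A_2$, $A_6$ in the paper).
\end{itemize}

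\textbf{The inclusions in $(0,1)$.} These are not proved, since monotonicity in $a$ only gives non-strict bounds.
\begin{itemize}
\item The limit in (1) equals $1-\frac{\sin\pi\rho}{\pi}\int_0^a(a-u)^{\rho-1}k(u)\,du$. So ``$<1$'' is the trivial direction. ``$>0$'' needs $k(u)\le u^{-\rho}$ together with $\frac{\sin\pi\rho}{\pi}\int_0^a(a-u)^{\rho-1}u^{-\rho}\,du=1$.
\item In (2), $g(a)<1$ follows from $(a+1-at-s)^{1-\alpha}<(1-s)^{1-\alpha}$ and the Beta integral $\int_0^1 s^{\alpha-2}(1-s)^{1-\alpha}ds=\Gamma(\alpha-1)\Gamma(2-\alpha)$.
\end{itemize}
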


In particular, Theorem \ref{MainThm01} with $a=1$ tells that $2-\Gamma(\rho)f(1)\in(0,1)$, which implies that the two events 
$T_0-\tau_0^+>t$ and $\tau_0^+>t$ are asymptotically dependent and enjoy the asymptotic symmetry in the sense that 
\beqnn
\mathbf{P} \Big( \frac{T_0-\tau_0^+}{t}\,\Big|\,\tau_0^+>t  \Big) 
\sim \mathbf{P} \Big( \frac{\tau_0^+}{t}\,\Big|\,T_0-\tau_0^+>t \Big) .
\eeqnn
Analogous asymptotic dependence also can be observed between $H_0>h$ and $\widehat{H}_0>h$.

In the extreme value theory, the well-known one-big-jump principle states that for two  weakly dependent  heavy-tailed random variables $\xi_1$ and $\xi_2$, we have as $x\to\infty$,
\beqnn
\mathbf{P}( \xi_1\vee \xi_2> x ) \sim \mathbf{P}( \xi
_1 > x ) + \mathbf{P}(\xi_2> x ).
\eeqnn 
Comparing to this principle, a direct consequence of the preceding theorems further identifies that  the two events  $T_0-\tau_{0}^{+}>t$ and  $\tau_0^+>t$ are strongly dependent. The same is also true for the events $H_0>h$ and $\widehat{H}_0>h$. 
\begin{corollary}
	The following  hold as $t,h\to\infty$: 
	\begin{enumerate}
		\item[(1)]   
		$
		\mathbf{P}\big(T_0-\tau_{0}^{+}>t \,|\, \tau_{0}^{+}\vee (T_0-\tau_{0}^{+})>t\big)=	\mathbf{P}\big(\tau_{0}^{+}>t \,|\, \tau_{0}^{+}\vee (T_0-\tau_{0}^{+})>t\big)\to \dfrac{1}{\Gamma(\rho) f(1)}\in (\frac{1}{2},1);
		$
		\item[(2)] $
		\mathbf{P}\big(H_0>h \,|\, \widehat{H}_0\vee H_{0}>h\big)	=\mathbf{P}\big(\widehat{H}_0>h \,|\, \widehat{H}_0\vee H_{0}>h\big)	\to\dfrac{1}{2-g(1)} \in(\frac{1}{2},1)$.
	\end{enumerate}
\end{corollary}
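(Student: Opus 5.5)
The corollary follows from Theorem~\ref{Mainresult02} and Theorem~\ref{MainThm01} (both taken as given) by inclusion--exclusion. Write $A_t=\{\tau_0^+>t\}$ and $B_t=\{T_0-\tau_0^+>t\}$. By Theorem~\ref{Mainresult02}(1), $\mathbf{P}(A_t)=\mathbf{P}(B_t)=:p(t)$. Then
\[
\mathbf{P}(A_t\cup B_t)=\mathbf{P}(A_t)+\mathbf{P}(B_t)-\mathbf{P}(A_t\cap B_t)=p(t)\big(2-\mathbf{P}(B_t\,|\,A_t)\big).
\]
Since $A_t\subset A_t\cup B_t$,
\[
\mathbf{P}(A_t\,|\,A_t\cup B_t)=\frac{1}{2-\mathbf{P}(B_t\,|\,A_t)},
\]
and by the symmetry $\mathbf{P}(A_t)=\mathbf{P}(B_t)$ the same expression equals $\mathbf{P}(B_t\,|\,A_t\cup B_t)$. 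This gives the two equalities in part (1).

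To get the limit in part (1), apply Theorem~\ref{MainThm01}(1) with $a=1$:
\[
\mathbf{P}(B_t\,|\,A_t)\to 1^{\rho-1}+1-\Gamma(\rho)f(1)=2-\Gamma(\rho)f(1).
\]
Hence the conditional probability tends to $1/(\Gamma(\rho)f(1))$. The range is also immediate from Theorem~\ref{MainThm01}: since $2-\Gamma(\rho)f(1)\in(0,1)$, the quantity $2-(2-\Gamma(\rho)f(1))=\Gamma(\rho)f(1)$ lies in $(1,2)$, so its reciprocal lies in $(\tfrac12,1)$.

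Part (2) works the same way. Set $A_h=\{\widehat{H}_0>h\}$ and $B_h=\{H_0>h\}$. Theorem~\ref{Mainresult02}(2) gives $\mathbf{P}(A_h)=\mathbf{P}(B_h)$, and Theorem~\ref{MainThm01}(2) with $a=1$ gives $\mathbf{P}(B_h\,|\,A_h)\to g(1)\in(0,1)$. The same identity then yields
\[
\mathbf{P}(A_h\,|\,A_h\cup B_h)=\mathbf{P}(B_h\,|\,A_h\cup B_h)\to\frac{1}{2-g(1)}\in\big(\tfrac12,1\big).
\]

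There is no real obstacle here. The only points to check are that the marginal probabilities are \emph{exactly} equal, which is needed for the stated equalities at finite $t$ and $h$ and is supplied by Theorem~\ref{Mainresult02}, and that all limits are evaluated at $a=1$.
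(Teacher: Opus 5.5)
Your proposal is correct and is essentially the paper's own argument: the paper gives no written proof, calling the corollary a direct consequence of Theorems~\ref{Mainresult02} and \ref{MainThm01}. Your identity $\mathbf{P}(A\,|\,A\cup B)=1/(2-\mathbf{P}(B\,|\,A))$, valid when $\mathbf{P}(A)=\mathbf{P}(B)$ exactly, evaluated at $a=1$ and combined with $2-\Gamma(\rho)f(1)\in(0,1)$ and $g(1)\in(0,1)$, is exactly that consequence, including the equalities at finite $t,h$ and the ranges $(\tfrac12,1)$.
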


\textit{\textbf {The transient case.}} 
We now turn to the case where $X$ has negative expectation $\mathbf{E}[X_1]=-\beta\in(-\infty,0 )$
and hence drifts to $-\infty$. In this case, Theorem~17 in \cite[p.~204]{Bertoin1996} shows that
\beqlb\label{aa2.9}
\mathbf{P}(\tau_0^+<\infty)=1-\beta/b<1.
\eeqlb
Our results are established under the regular-variation condition on the tail-function $\overline{\nu}$  defined before Theorem~\ref{Mainresult02}.
\begin{condition}\label{Assumption02}  Assume that $\overline{\nu} \in\mathrm{RV}_{-\theta}^{\infty}$ for some $\theta>1$.
\end{condition}

The first  result studies the asymptotic behavior of the joint-probabilities of the  lengths $(\tau_0^+, T_0 - \tau_0^+)$ and heights $(H_0,\widehat{H}_0)$.
\begin{theorem}\label{Mainresult005}
	The following   hold as $t,h\to\infty$:
	\begin{enumerate}
		\item[(1)] $	
		\mathbf{P}(  \tau_0^{+}>t\,|\, \tau_0^{+}<\infty)=	\mathbf{P}(  T_0-\tau_0^{+}>t\,|\, \tau_0^{+}<\infty)
		\sim \dfrac{1}{\theta -1}\dfrac{1}{b-\beta}\cdot \beta t\overline{\nu}(\beta t);$ 
		\item[(2)] $\mathbf{P}(  H_0>h\,|\, \tau_0^{+}<\infty) =\mathbf{P}(  \widehat{H}_0>h\,|\, \tau_0^{+}<\infty) 
		\sim \dfrac{1}{\theta -1}\dfrac{1}{b-\beta}\cdot h\overline{\nu}(h) .$
	\end{enumerate}
\end{theorem}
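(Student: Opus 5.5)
The plan is to reduce everything to the joint law of the undershoot and overshoot at $\tau_0^+$, conditionally on which the two parts of the excursion become first-passage problems for $X$ started afresh.

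\textbf{Step 1: undershoot--overshoot law.} Since $X$ has no negative jumps and $\tau_0^+$ happens only by a jump, the compensation formula gives
\[
\mathbf{P}\big(\tau_0^+\in dt,\,-X_{\tau_0^+-}\in dy,\,X_{\tau_0^+}\in dz\big)=\mathbf{P}\big(X_t\in -dy,\ t<\tau_0^+\big)\,\nu(y+dz)\,dt .
\]
For a process with bounded-variation paths and drift $-b$, the potential of $X$ killed at $\tau_0^+$ has density $b^{-1}$ on $(-\infty,0)$. I would justify this by time reversal: $\{X_{(t-s)-}-X_t\}_{s\le t}\overset{d}{=}\{-X_s\}_{s\le t}$, so that $t<\tau_0^+$ with $X_t=-y$ becomes ``$t$ is a new running infimum at level $-y$'', and each level is passed exactly once at speed $b$. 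Hence the pair $(-X_{\tau_0^+-},X_{\tau_0^+})$ has density $b^{-1}\nu(y+dz)\,dy$ on $\{\tau_0^+<\infty\}$. Its total mass is $b^{-1}\int_0^\infty\overline\nu(y)\,dy=(b-\beta)/b$, which agrees with \eqref{aa2.9}. This density is \emph{symmetric} in $(y,z)$, and both marginals equal $b^{-1}\overline\nu(x)\,dx$.

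\textbf{Step 2: the two parts given $(y,z)$.} By the strong Markov property, $T_0-\tau_0^+$ is an independent copy of $\tau^-_{-z}$. The process creeps downward, so this is the hitting time of $-z$. For the first part I use the same time reversal: conditionally on $\tau_0^+<\infty$, $-X_{\tau_0^+-}=y$ and the jump, the pre-jump path reversed and negated is $X$ run until its first passage to $-y$. Equivalently, this follows from Kendall's identity $t\,\mathbf{P}(\tau^-_{-y}\in dt)=y\,\mathbf{P}(X_t\in -dy)\,dt$ combined with the duality used in Step 1. Thus $\tau_0^+$ has the law of $\tau^-_{-y}$, and its pre-jump minimum $\widehat H_0$ has the law of $H$ for the reversed path. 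Likewise $H_0=z+\sup_{s<\tau^-_{-z}}(X_s)$, where $X$ is restarted at $0$. The symmetry of the density in Step 1 then gives the equalities in (1) and (2) directly. For instance,
\[
\mathbf{P}(\tau_0^+>t,\ \tau_0^+<\infty)=\mathbf{P}(T_0-\tau_0^+>t,\ \tau_0^+<\infty)=b^{-1}\int_0^\infty\overline\nu(x)\,\mathbf{P}(\tau^-_{-x}>t)\,dx .
\]

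\textbf{Step 3: asymptotics.} By the law of large numbers, $\tau^-_{-x}/x\to1/\beta$. So I split the integral into $x\le(1-\varepsilon)\beta t$, $|x-\beta t|\le\varepsilon\beta t$, and $x\ge(1+\varepsilon)\beta t$.
\begin{itemize}
\item On the last range the probability is close to $1$, giving $b^{-1}\int_{\beta t}^\infty\overline\nu\sim b^{-1}\beta t\overline\nu(\beta t)/(\theta-1)$ by Karamata.
\item The middle range contributes $O(\varepsilon)\,t\overline\nu(t)$.
\item On the first range, $\mathbf{P}(\tau^-_{-x}>t)\le\mathbf{P}(X_t>-(1-\varepsilon)\beta t)$. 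This is a heavy-tailed large deviation of order $t\overline\nu(\varepsilon t)$ (one big jump), and it is multiplied by the finite quantity $\int_0^\infty\overline\nu$, so it is $o(t\overline\nu(t))$.
\end{itemize}
Dividing by $\mathbf{P}(\tau_0^+<\infty)=(b-\beta)/b$ yields (1).

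For (2), $\mathbf{P}(H_0>h)\ge\mathbf{P}(z>h)\sim b^{-1}h\overline\nu(h)/(\theta-1)$. The excess comes from a later jump of size of order $h$ during a time of order $z/\beta$. Bounding by $\mathbf{P}(z>\varepsilon h)$ plus $C\,\mathbf{E}[z\wedge h]\,\overline\nu(\varepsilon h)$, with $\mathbf{E}[z\wedge h]=o(h)$, shows this excess is $o(h\overline\nu(h))$ after letting $\varepsilon\to0$ along regular variation. Here $\mathbf{E}[z\wedge h]=o(h)$ holds because the tail of $z$ is $\sim h\overline\nu(h)/(b(\theta-1))\to 0$.

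\textbf{Main obstacle.} The delicate step is the rigorous time-reversal/duality identification of the pre-jump segment and of $\widehat H_0$ given the undershoot. The fallback is to use Kendall's identity together with the fluctuation identity for the potential measure killed at $\tau_0^+$. A secondary obstacle is the uniform large-deviation bound in Step 3, for which I would invoke the standard one-big-jump bound for centered heavy-tailed Lévy processes.
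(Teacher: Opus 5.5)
Your Steps 1--2 are the paper's Lemma~\ref{lem3.2} (the density \eqref{eqn2.22} plus Bertoin's time reversal). Your three ranges in Step~3 are the paper's $J_1^\delta,J_2^\delta,J_3^\delta$, and the middle and last ranges are handled correctly.

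\textbf{The gap in (1) is the first range.} For $x\le(1-\varepsilon)\beta t$, your bound $\mathbf{P}(\tau^-_{-x}>t)\le\mathbf{P}(X_t>-(1-\varepsilon)\beta t)$ does not depend on $x$. By the very one-big-jump asymptotics you invoke, it is $\sim t\overline\nu(\varepsilon\beta t)\sim(\varepsilon\beta)^{-\theta}\,t\overline\nu(t)$, which is the same order as the main term. Multiplying by $\int_0^\infty\overline\nu=b-\beta$ gives a contribution of order $(b-\beta)(\varepsilon\beta)^{-\theta}\,t\overline\nu(t)$. This is not $o(t\overline\nu(t))$, and it even blows up as $\varepsilon\to0$. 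That it tends to $0$ in absolute terms is irrelevant, since $t\overline\nu(t)$ does too.

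The bound is lossy because it forgets that the process started at $x$ is killed after a time of order $x/\beta$, so the big jump must fall in that window. The true size is $\mathbf{P}_x(\tau_0^->t)\sim x\overline\nu(\beta t)/\beta$ (Lemma~\ref{Lemma303}), smaller by a factor of order $x/t$. Some estimate of this type, with control in $x$, cannot be avoided.

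The paper combines Lemma~\ref{Lemma303} with the uniform bound $\mathbf{P}_x(\tau_0^->t)\le C_\delta(1+x)\overline\nu(\beta t)$ of Lemma~\ref{uniest}. Then $\overline\nu(\beta t)\int_0^{ct}(1+x)\overline\nu(x)\,dx=o(t\overline\nu(t))$ because $x\overline\nu(x)\to0$.

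A lighter repair within your scheme is to split the first range at a fixed $M$:
\begin{itemize}
\item For $x\le M$, use monotonicity in $x$ and $\mathbf{P}(\tau^-_{-M}>t)=\mathbf{P}_M(\tau_0^->t)\sim M\overline\nu(\beta t)/\beta=o(t\overline\nu(t))$.
\item For $M<x\le(1-\varepsilon)\beta t$, keep your crude bound; its prefactor $\int_M^\infty\overline\nu$ vanishes as $M\to\infty$.
\end{itemize}
Let $t\to\infty$, then $M\to\infty$, then $\varepsilon\to0$.

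\textbf{In (2) the same issue reappears and is asserted rather than proved.}
\begin{itemize}
\item $\mathbf{P}(z>\varepsilon h)\sim\varepsilon^{1-\theta}\,b^{-1}h\overline\nu(h)/(\theta-1)$ exceeds the main term, so it cannot be part of an error bound. You presumably mean $\mathbf{P}(z>(1-\varepsilon)h)$.
\item You claim that after the overshoot the path climbs a further $\varepsilon h$ before $\tau^-_{-z}$ with probability at most $C(1+z)\overline\nu(\varepsilon h)$, uniformly in $z\le h$. This is exactly the kind of uniform estimate missing above.
\end{itemize}
The compensation formula only controls a single jump larger than $\eta h$ before $\tau^-_{-z}$, with probability at most $\overline\nu(\eta h)\,\mathbf{E}[\tau^-_{-z}]=\overline\nu(\eta h)\,z/\beta$. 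Climbing through jumps all smaller than $\eta h$ needs a separate truncated large-deviation bound.

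The paper avoids this via \eqref{eqn3.24}, namely $\mathbf{P}_x(H_0\ge\beta t)\le(1+o(1))\,\mathbf{P}_x(\tau_0^->(1-\varepsilon)t)$ uniformly in $x$. This reduces (2) to the first-range estimate of (1), so once that is repaired, (2) follows the same way.
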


Analogous to  the    recurrent case, the second result states that  the events
$\tau_0^{+}>t$ and $T_0- \tau_0^{+}>t$ 
are  asymptotically dependent.  The same result also holds for the events
$H_0>h$ and $\widehat{H}_0>h$. Moreover, the asymptotic symmetry described above also holds.

\begin{theorem}\label{MainThm02}
	For any $a>0$,	the following hold as $t,h\to\infty$: 
	\begin{enumerate}
		\item[(1)]  
		$
		\mathbf{P}\big(T_0-\tau_{0}^{+}>at \,|\, \tau_{0}^{+}>t, \tau_0^{+}<\infty\big)=	\mathbf{P}\big(\tau_{0}^{+}>at \,|\, T_0-\tau_{0}^{+}>t, \tau_0^{+}<\infty\big)\to (1+a)^{1-\theta}\in(0,1);$
		\item[(2)] $	\mathbf{P}\big(H_0>ah \,|\, \widehat{H}_0>h,\tau_{0}^{+}<\infty\big)=\mathbf{P}\big(\widehat{H}_0>ah \,|\, H_0>h,\tau_{0}^{+}<\infty\big)\to(1+a)^{1-\theta}\in(0,1)
		$.
	\end{enumerate}
\end{theorem}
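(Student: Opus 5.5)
Throughout, I will use the structure of the process to decompose the excursion at $\tau_0^+$. Since $X$ has no negative jumps, it starts at $0$ and immediately moves down, because of the negative drift $-b$. It stays below zero until a jump carries it from $X_{\tau_0^+-}=-U$ to $X_{\tau_0^+}=V>0$, where $U$ is the undershoot and $V$ the overshoot. Afterwards it must creep down to $0$, and by the strong Markov property $T_0-\tau_0^+$ is the first passage time of $X$ below level $0$ started from $V$. Similarly, $H_0=\sup_{s\in[\tau_0^+,T_0]}X_s$ is the supremum of $X$ started at $V$ before hitting $0$, and $\widehat H_0=\sup_{s<\tau_0^+}(-X_s)$. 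The key tool will be the classical identity for the joint law at first passage (Bertoin, Doney–Kyprianou). On $\{\tau_0^+<\infty\}$,
\begin{equation*}
\mathbf{P}\big(-X_{\tau_0^+-}\in du,\,X_{\tau_0^+}\in dv\big)=\frac{1}{b}\,\nu(u+dv)\,du ,\qquad u,v>0,
\end{equation*}
which holds in both cases (its total mass is $\frac1b\int_0^\infty\overline\nu(u)du=1-\beta/b$, matching \eqref{aa2.9}). I will also use duality. Time reversal of the excursion, $s\mapsto -X_{(T_0-s)-}$ after reflection, maps the pre-$\tau_0^+$ part onto the post-$\tau_0^+$ part and exchanges $(u,v)$. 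This yields the stated symmetry equalities, and the measure above is symmetric in an appropriate sense. It therefore suffices to prove one of each pair.

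For part (2), I condition on $(U,V)$. Given $V=v$, the post-jump piece is $X$ started at $v$ and killed at $\tau_0^-$, and since $X$ has bounded-variation paths it actually hits $0$ (at $T_0$) by creeping. In the transient case $X_t\to-\infty$ linearly with speed $\beta$, so I expect $H_0\approx V$ up to $o(h)$ for large $V$. Indeed, the probability that $X$ from $v$ climbs a further $\epsilon h$ tends to $0$ uniformly, since $\mathbf{P}(\sup_t X_t>\epsilon h)\to0$. Likewise $\widehat H_0\approx U$: the pre-jump path is a drift-$(-b)$ line interrupted by jumps that are conditioned to stay below zero, and the running minimum should be $U+O(1)$ in probability. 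I will verify this by viewing the pre-jump path as $X$ killed at passage above $0$ and using the law of the minimum before $\tau_0^+$. This reduces the problem to the tail of the pair $(U,V)$:
\begin{equation*}
\mathbf{P}(U>h,\,V>ah)=\frac1b\int_h^\infty \overline\nu(u+ah)\,du\sim\frac{1}{b(\theta-1)}\,h\,\overline\nu\big((1+a)h\big)\,\frac{\theta-1}{1}\cdot\frac{(1+a)}{\theta-1},
\end{equation*}
which by Karamata's theorem is $\frac{(1+a)^{1-\theta}}{b(\theta-1)}h\overline\nu(h)$. Dividing by $\mathbf{P}(U>h)\sim\frac{1}{b(\theta-1)}h\overline\nu(h)$, and noting that normalising by $\mathbf{P}(\tau_0^+<\infty)=(b-\beta)/b$ cancels, gives $(1+a)^{1-\theta}$.

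For part (1) I will use the law of large numbers for lengths. On $\{\tau_0^+<\infty\}$, a large $\tau_0^+$ should force a large undershoot, with $\tau_0^+\approx U/\beta$. The reason is that conditioned to stay negative for a long time, the process drifts linearly at rate $\beta$ with only $O(1)$-type fluctuations. Symmetrically, $T_0-\tau_0^+\approx V/\beta$, since from $v$ the passage time to $0$ is $v/\beta+o(v)$ by the law of large numbers for $X$. The conditional probability then becomes $\mathbf{P}(V>a\beta t\mid U>\beta t)\to(1+a)^{1-\theta}$ by the previous computation.

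The main obstacle is justifying $\tau_0^+\approx U/\beta$. Conditioning on a large $\tau_0^+$ could instead be achieved through several moderate jumps that keep the process negative longer than usual, rather than through one large undershoot. I would handle this with a one-big-jump argument. I will write $\mathbf{P}(\tau_0^+>t,\tau_0^+<\infty)=\mathbf{E}[\mathbf{P}_{X_t}(\tau_0^+<\infty);\tau_0^+>t]$ and use $\mathbf{P}_{-x}(\tau_0^+<\infty)\sim\frac1{b}\int_x^\infty\overline\nu$. Together with the law of large numbers $X_t\approx-\beta t$ on $\{\tau_0^+>t\}$, which holds with overwhelming probability, this gives both the tail in Theorem~\ref{Mainresult005} and the joint asymptotics $U\ge \beta t(1-\epsilon)$. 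The remaining work is to control the joint event with explicit upper and lower bounds at $\pm\epsilon$ and then let $\epsilon\to0$.
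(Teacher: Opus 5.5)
Your reduction to the undershoot/overshoot pair $(U,V)$ with law $b^{-1}\nu(u+dv)\,du$ matches the paper's treatment of the region $H_1$. So do your main term $\mathbf{P}(U>\beta t,V>a\beta t)=b^{-1}\int_{(1+a)\beta t}^\infty\overline{\nu}$ and your LLN lower bounds. The gap is in showing that everything else is negligible \emph{at the scale of the tail}, and it appears first in the denominators.

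In (2) you divide by $\mathbf{P}(U>h)$ in place of $\mathbf{P}(\widehat H_0>h,\tau_0^+<\infty)$. In (1) you effectively condition on $\{U>\beta t\}$ in place of $\{t<\tau_0^+<\infty\}$. The only justification is ``$\widehat H_0=U+O(1)$ in probability'', resp.\ $\tau_0^+\approx U/\beta$. Such statements give bounds that are uniformly $o(1)$ in the starting point, and integrating them against the whole mass $1-\beta/b$ of $U$ does not give $o(h\overline{\nu}(h))$. For instance, the natural bound $\mathbf{P}(U\le(1-\epsilon)h,\widehat H_0>h)\le \mathbf{P}(\sup_sX_s>\epsilon h)\,\mathbf{P}(\tau_0^+<\infty)$ is itself of order $h\overline{\nu}(h)$, since $\mathbf{P}(\sup_sX_s>x)\sim\beta^{-1}\int_x^\infty\overline{\nu}$. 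Likewise, $\mathbf{P}_{(1-\epsilon)\beta t}(\tau_0^->t)\to0$ only gives $\mathbf{P}(\tau_0^+>t,\,U\le(1-\epsilon)\beta t)=o(1)$.

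The competing scenario is also not ``several moderate jumps''. It is one large jump that does not cross zero, followed by a moderate crossing from a shallow level. In reversed time, $X$ started from a moderate $u$ jumps high and needs time $t$ to return, which has weight $\mathbf{P}_u(\tau_0^->t)\sim u\overline{\nu}(\beta t)/\beta$ by Lemma~\ref{Lemma303}. What makes this negligible is the killing at $0$, quantified by the uniform bound $\mathbf{P}_u(\tau_0^->t)+\mathbf{P}_u(H_0>\beta t)\le C(1+u)\overline{\nu}(\beta t)$ for $u\le\delta t$ (Lemma~\ref{uniest}). It yields $\int_0^{\delta t}\mathbf{P}_u(\tau_0^->t)\overline{\nu}(u)\,du=O\big(\overline{\nu}(t)\int_0^{\delta t}(1+u)\overline{\nu}(u)\,du\big)=o(t\overline{\nu}(t))$. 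This is the missing ingredient; the paper uses it in the proof of Theorem~\ref{Mainresult005} and again on $H_{21}$.

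Your proposed substitute, the Markov property at time $t$, does not supply this estimate as stated, for three reasons.
\begin{itemize}
\item $\mathbf{P}(X_t>-(1-\epsilon)\beta t)$ is of order $t\overline{\nu}(t)$, the same order as $\mathbf{P}(t<\tau_0^+<\infty)$, so ``LLN with overwhelming probability'' is not enough. You would need to show that such deviating paths are further penalised by the later crossing. That fails when $X_t$ ends near $0$, so it needs an extra tool such as the ballot identity $\mathbf{P}(\tau_0^+>t\mid X_t)=(-X_t)^+/(bt)$.
\item The ruin asymptotics is $\mathbf{P}_{-x}(\tau_0^+<\infty)\sim\beta^{-1}\int_x^\infty\overline{\nu}$, not $b^{-1}\int_x^\infty\overline{\nu}$.
\item Knowing $X_t$ does not by itself control the undershoot $U$ at the later crossing.
\end{itemize}
The simplest repair is to invoke Theorem~\ref{Mainresult005} for both denominators. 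After that your $\pm\epsilon$ sandwich does close, because conditional independence given $(U,V)$ makes the numerator error terms factor, e.g.
\[
\mathbf{P}\big(\tau_0^+>t,\,U\le(1-\epsilon)\beta t,\,T_0-\tau_0^+>at\big)\le \mathbf{P}_{(1-\epsilon)\beta t}(\tau_0^->t)\,\mathbf{P}\big(T_0-\tau_0^+>at,\,\tau_0^+<\infty\big)=o(1)\cdot O\big(t\overline{\nu}(t)\big).
\]
The heights work the same way with the factor $\mathbf{P}(\sup_sX_s>\epsilon h)$. Run this way, your argument is slightly lighter than the paper's handling of $H_2$.
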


The following corollary is an immediate consequence of the preceding two theorems.
\begin{corollary}
	The following  hold as $t,h\to\infty$:
	\begin{enumerate}
		\item[(1)]   
		$
		\mathbf{P}\big(T_0-\tau_{0}^{+}>t \,|\, \tau_{0}^{+}\vee (T_0-\tau_{0}^{+})>t\big)=	\mathbf{P}\big(\tau_{0}^{+}>t \,|\, \tau_{0}^{+}\vee (T_0-\tau_{0}^{+})>t\big)\to \dfrac{1}{2-(1+a)^{1-\theta}}\in (\frac{1}{2},1);
		$
		\item[(2)] $
		\mathbf{P}\big(H_0>h \,|\, \widehat{H}_0\vee H_{0}>h\big)	=\mathbf{P}\big(\widehat{H}_0>h \,|\, \widehat{H}_0\vee H_{0}>h\big)	\to\dfrac{1}{2-(1+a)^{1-\theta}} \in(\frac{1}{2},1)$.
	\end{enumerate}
\end{corollary}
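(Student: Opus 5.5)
This corollary should follow from Theorems~\ref{Mainresult005} and \ref{MainThm02} by inclusion--exclusion, with no further probabilistic input. I work throughout under $\mathbf{P}(\,\cdot\,|\,\tau_0^+<\infty)$. This is implicit in the statement, since on $\{\tau_0^+=\infty\}$ we have $T_0=\infty$ and the maximum is trivially infinite. I also read the displayed limit at $a=1$: the parameter $a$ in the formula is a leftover from Theorem~\ref{MainThm02}, and with $a=1$ the limit is $1/(2-2^{1-\theta})$.

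For part (1), write $A_t=\{\tau_0^+>t\}$ and $B_t=\{T_0-\tau_0^+>t\}$, so that $\{\tau_0^+\vee(T_0-\tau_0^+)>t\}=A_t\cup B_t$. By Theorem~\ref{Mainresult005}(1), $\mathbf{P}(A_t\,|\,\tau_0^+<\infty)$ and $\mathbf{P}(B_t\,|\,\tau_0^+<\infty)$ are equal, and I denote this common value by $p(t)$. By Theorem~\ref{MainThm02}(1) with $a=1$,
\[
\mathbf{P}(A_t\cap B_t\,|\,\tau_0^+<\infty)=p(t)\,\mathbf{P}(B_t\,|\,A_t,\tau_0^+<\infty)=p(t)\big(2^{1-\theta}+o(1)\big).
\]
Inclusion--exclusion then gives $\mathbf{P}(A_t\cup B_t\,|\,\cdot)=p(t)\big(2-2^{1-\theta}+o(1)\big)$. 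Since $B_t\subset A_t\cup B_t$,
\[
\mathbf{P}(B_t\,|\,A_t\cup B_t,\tau_0^+<\infty)=\frac{p(t)}{p(t)(2-2^{1-\theta}+o(1))}\to\frac{1}{2-2^{1-\theta}}.
\]
The same computation applies to $A_t$. The two conditional probabilities are in fact equal for every $t$, not just asymptotically, because $\mathbf{P}(A_t\,|\,\cdot)=\mathbf{P}(B_t\,|\,\cdot)$ holds exactly.

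Part (2) is identical. Take $A_h=\{\widehat H_0>h\}$ and $B_h=\{H_0>h\}$, and use Theorem~\ref{Mainresult005}(2) together with Theorem~\ref{MainThm02}(2) at $a=1$.

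It remains to check that the limit lies in $(\tfrac12,1)$. Since $\theta>1$, we have $0<2^{1-\theta}<1$. Hence $2-2^{1-\theta}\in(1,2)$, and its reciprocal lies in $(\tfrac12,1)$.

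The only real subtlety is justifying that $p(t)>0$ for all large $t$, so that the division is legitimate. This follows from the asymptotic equivalences in Theorem~\ref{Mainresult005}, since $\overline\nu$ is regularly varying and therefore eventually positive.
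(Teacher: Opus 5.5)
Your proposal is correct and is exactly the argument the paper intends when it calls this corollary an immediate consequence of Theorems~\ref{Mainresult005} and \ref{MainThm02}. You combine, by inclusion--exclusion, the exact equality of the two marginal tails with the $a=1$ case of the conditional limit. You also repair the statement's typos correctly: the conditioning on $\{\tau_0^+<\infty\}$ should run throughout, and the limit should be read at $a=1$, namely $1/(2-2^{1-\theta})$.
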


\textbf{Organization of this paper.}
In Section \ref{Sec.Preliminaries}, we  provide two auxiliary results that play an important role in the proofs of our main results. 
Section \ref{Sec.TransientCase} is devoted to the proofs of Theorem \ref{Mainresult02}, \ref{MainThm01}, \ref{Mainresult005} and  \ref{MainThm02}.

 \section{Auxiliary results} 
\label{Sec.Preliminaries}
\setcounter{equation}{0}
In  this section, we  present several auxiliary results that will be essential for proving our main results.
All processes in this work are defined on a complete probability space
$(\Omega,\mathscr{F},\mathbf{P})$ equipped with a filtration $\{\mathscr{F}_t\}_{t\geq 0}$ satisfying the usual hypotheses. 
For $x\in\mathbb{R}$, let $\mathbf{P}_x$ and $\mathbf{E}_x$ denote the law and expectation of the  process starting from $x$. 
For simplicity, we also write $\mathbf{P}= \mathbf{P}_0$ and $\mathbf{E}=\mathbf{E}_0$.

\begin{lemma}\label{lem3.2}
	For any  $q_1,q_2, h_1,h_2\geq 0$,	the law of the pair $\big(\tau_{0}^+, T_0-\tau_{0}^+,H_0,\hat{H}_0\big)$ is given by
	\beqlb\label{eqn2.1}
	\lefteqn{\mathbf{E}\big[
		\exp\{-q_1\tau_{0}^+ -q_2(T_0-\tau_{0}^+)\}; H_0>h_1,\hat{H}_0>h_2 
		, \tau_0^{+}<\infty
		\big]}\ar\ar\cr
	\ar=\ar b^{-1}  \int_{0}^{\infty} 
	\mathbf{E}_{x}\big[e^{-q_1\tau_0^{-}}; H_0>h_1\big]dx\int_{0}^{\infty} \mathbf{E}_{y}\big[e^{-q_2\tau_0^{-}}; H_0>h_2\big]
	\nu(dy+x).
	\eeqlb 
\end{lemma}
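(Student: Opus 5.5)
The plan is to decompose the path at the time $\tau_0^+$ of the first upward crossing of $0$. Since $X$ has no upward creeping (it has bounded variation, negative drift and only positive jumps), on $\{\tau_0^+<\infty\}$ the level $0$ is crossed by a jump from $X_{\tau_0^+-}=-x<0$ to $X_{\tau_0^+}=y>0$. After that the process moves from $y$ down to $0$ and, by downward creeping, hits $0$ exactly at $\tau_0^-$ of the shifted path. Therefore $T_0-\tau_0^+$ is the first passage time below $0$ of the post-$\tau_0^+$ process started at $y$. The quantities of the excursion are then assigned as follows:
\begin{itemize}
\item the post-jump piece carries the time $T_0-\tau_0^+$ and the supremum $H_0$;
\item the pre-jump piece carries $\tau_0^+$ and the infimum $-\widehat H_0$.
\end{itemize}
The overall strategy is: (i) apply the compensation formula for the Poisson point process of jumps at the crossing jump; (ii) apply the strong Markov property to the post-jump piece; (iii) apply time-reversal (duality) to the pre-jump piece, to write it as a first-passage functional of $X$ started at $x$.

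\textbf{Step 1 (compensation formula).} The left side of (\ref{eqn2.1}) equals $\mathbf{E}\sum_{s}F(s,X_{s-},\Delta X_s)$, a sum over jump times $s$, with
\[
F(s,X_{s-},\Delta X_s)=\mathbf{1}\{s\le \tau_0^+,\ X_{s-}+\Delta X_s>0\}\, e^{-q_1 s}\,\mathbf{1}\{\textstyle\inf_{u<s}X_u<-h_2\}\, G(X_{s-}+\Delta X_s),
\]
where
\[
G(y):=\mathbf{E}_y\big[e^{-q_2\tau_0^-};\ \sup_{u\le\tau_0^-}X_u>h_1\big].
\]
The functional $G$ is obtained from the strong Markov property at $\tau_0^+$, using that $T_0=\tau_0^+ + \tau_0^-\circ\theta_{\tau_0^+}$. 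The compensation formula turns the sum into
\[
\int_0^\infty ds\, e^{-q_1 s}\int \mathbf{P}\big(X_s\in -dx,\ \sup_{u\le s}X_u\le 0,\ \inf_{u\le s}X_u<-h_2\big)\int \nu(dz)\,\mathbf{1}\{z>x\}\,G(z-x).
\]
Substituting $z=x+y$ gives the factor $\int_0^\infty G(y)\,\nu(dy+x)$.

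\textbf{Step 2 (duality for the pre-jump piece).} It remains to show, for $x>0$,
\[
\int_0^\infty e^{-q_1 s}\,\mathbf{P}\big(X_s\in -dx,\ \sup_{u\le s}X_u\le 0,\ \inf_{u\le s}X_u<-h_2\big)\,ds
= b^{-1}\,\mathbf{E}_x\big[e^{-q_1\tau_0^-};\ \sup_{u\le \tau_0^-}X_u>h_2\big]\,dx .
\]
Fix $s$ and reverse time by setting $\widetilde X_u:=X_s-X_{(s-u)-}$ for $u\le s$. Then $\widetilde X$ has the same law as $X$ on $[0,s]$. The constraint "$X$ stays $\le 0$ on $[0,s]$" becomes "$-x+\widetilde X$ stays $\ge -x$", i.e. $x+\widetilde X$ stays $\ge 0$ on $[0,s]$ and ends at $x$. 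The constraint "the infimum of $X$ is $<-h_2$" becomes "$x+\widetilde X$ exceeds $h_2$ before time $s$". So the left side equals $\int_0^\infty e^{-q_1 s}\,\mathbf{P}_0(\ldots)\,ds$ for the process started at $0$, conditioned to be at its running infimum $-x$ at time $s$, with a height constraint on $[0,s]$.

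Since $X$ has drift $-b$ and no negative jumps, the running infimum decreases continuously at speed $b$ exactly at the times when $X$ equals it. Hence the occupation measure of the set $\{s:X_s=\inf_{u\le s}X_u\in -dx\}$ is $b^{-1}\,dx$ times the indicator of the first passage below $-x$. More precisely,
\[
\int_0^\infty e^{-q_1 s}\,\mathbf{1}\{X_s=\underline X_s\in -dx\}\,ds=b^{-1}e^{-q_1\tau^-_{-x}}\,dx .
\]
Taking expectations, with the height event attached, and shifting the starting point to $x$ gives exactly the displayed identity. The height event is measurable with respect to the path up to $\tau_{-x}^-$ and survives the shift.

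\textbf{Main obstacle.} The step I expect to be hardest is making the duality of Step 2 rigorous jointly with the path functionals. One has to check that the time-reversal maps the events $\{\sup_{u\le s}X_u\le 0\}$ and $\{\inf_{u\le s}X_u<-h_2\}$ precisely onto the first-passage event and the height event, including the handling of strict versus non-strict inequalities and of the $s-$ limits, which matter only on null sets. One also has to justify the occupation-density identity $b^{-1}dx$ using the bounded-variation structure. Finally, this step fixes which of $h_1,h_2$ attaches to which factor, so the pairing of heights with the pre- and post-jump factors in (\ref{eqn2.1}) must be checked against it. Once this is done, inserting Step 2 into Step 1 yields (\ref{eqn2.1}).
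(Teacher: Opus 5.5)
\paragraph{Comparison with the paper's route.} Your route is correct in substance and genuinely different from the paper's. The paper conditions on $(X_{\tau_0^+-},X_{\tau_0^+})=(-x,y)$ and imports two facts. The first is the undershoot--overshoot law $b^{-1}dx\,\nu(dy+x)$ from Bertoin's book. The second is Bertoin's 1992 path decomposition: the reversed pre-jump path and the post-jump path are conditionally independent and are copies of $X$ from $x$, resp.\ $y$, killed at $0$.

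You derive both facts directly. The compensation formula at the crossing jump, together with the strong Markov property, produces the post-jump factor and the kernel $\nu(dy+x)$. Fixed-time duality, together with the occupation identity $d(-\underline X_s)=b\,\mathbf{1}\{X_s=\underline X_s\}\,ds$ (where $\underline X_s=\inf_{u\le s}X_u$), produces both the density $b^{-1}dx$ and the identification of the pre-jump piece. For $q_1=h_2=0$ your Step 2 is exactly the potential-density fact behind the overshoot law. Your route costs some routine null-set checks, but it avoids conditioning on null events and makes explicit which functional sits in which factor.

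There are two cosmetic slips in Step 2. The reversed constraint is $\widetilde X_u\ge -x$ for $u\le s$. Also, $x+\widetilde X$ starts at $x$ and ends at $0$ at time $s$, not at $x$.

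\paragraph{The deferred pairing check.} The check you postpone at the end does not come out in favour of \eqref{eqn2.1}, so your last sentence is wrong. Your Step 1 puts $h_1$ into the post-jump factor, and your Step 2 puts $h_2$ into the pre-jump factor. What your argument actually proves is
\[
\begin{aligned}
&\mathbf{E}\big[e^{-q_1\tau_0^+-q_2(T_0-\tau_0^+)};\,H_0>h_1,\ \widehat H_0>h_2,\ \tau_0^+<\infty\big]\\
&\qquad=b^{-1}\int_0^\infty\mathbf{E}_x\big[e^{-q_1\tau_0^-};H_0>h_2\big]\,dx\int_0^\infty\mathbf{E}_y\big[e^{-q_2\tau_0^-};H_0>h_1\big]\,\nu(dy+x).
\end{aligned}
\]
This is the correct identity, because $\tau_0^+$ and $\widehat H_0$ both belong to the piece below $0$. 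The paper's own decomposition gives the same pairing: the supremum of $-X_{(\tau_0^+-t)-}$ is $\widehat H_0$.

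With the heights placed as printed, \eqref{eqn2.1} is false in general. Take $q_2=h_2=0$, $h_1=h>0$ and $q_1=q\to\infty$.
\begin{itemize}
\item Right side: under $\mathbf{P}_x$ on $\{H_0>h\}$, the path needs time at least $h/b$ to come down to $0$. So the printed right side is at most $b^{-1}e^{-qh/b}\int_0^\infty\overline\nu(x)\,dx\le e^{-qh/b}$, using $\int_0^\infty\overline\nu(x)\,dx=b+\mathbf{E}[X_1]\le b$ in the paper's setting.
\item Left side: let $\sigma_1$ be the first jump time and $J_1$ the first jump size. If $J_1>b\sigma_1+h$, then $\tau_0^+=\sigma_1$ and $H_0>h$. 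Hence the left side is at least $\mathbf{E}[e^{-q\sigma_1};\sigma_1\le 1]\,\nu(h+b,\infty)/\nu(0,\infty)$. This is of order $q^{-1}$ whenever $\nu(h+b,\infty)>0$, so it beats $e^{-qh/b}$ for large $q$.
\end{itemize}

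The two formulas coincide trivially when $h_1=h_2$. They also coincide when $q_1=q_2$, because $dx\,\nu(dy+x)$ is symmetric in $(x,y)$: by \eqref{eqn3.1}--\eqref{eqn3.2}, $\iint F(x)G(y)\,dx\,\nu(dy+x)=\int\nu(dz)\int_0^z F(u)G(z-u)\,du$. These are the only cases the paper uses later, so its results are unaffected. You should state and prove the corrected identity and record this symmetry, rather than claim \eqref{eqn2.1} verbatim.
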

\proof
Note that the left-hand side of \eqref{eqn2.1} equals to 
\beqlb\label{eqn2.2}
\ar\ar\iint_{\mathbb{R}_+^2}\mathbf{E}\big[
\exp\{-q_1\tau_{0}^+ -q_2(T_0-\tau_{0}^+)\}; H_0>h_1,\hat{H}_0>h_2
\,\big|\, X_{\tau_{0}^{+}-}=-x , X_{\tau_{0}^{+}}= y, \tau_0^{+}<\infty
\big]\cr\cr
\ar\ar\hspace{23em}\times\mathbf{P}(X_{\tau_{0}^{+}-}\in -dx , X_{\tau_{0}^{+}}\in dy, \tau_{0}^{+}<\infty).
\eeqlb
It follows from  Theorem 17 in \cite[p204]{Bertoin1996} that 
\beqlb\label{eqn2.22}
\mathbf{P}(X_{\tau_{0}^{+}-}\in -dx , X_{\tau_{0}^{+}}\in dy,\tau_{0}^{+}<\infty)= b^{-1} dx\,  \nu(dy+x),\quad x,y\geq 0.
\eeqlb
By Lemma 1 in \cite{bertoin1992extension}, conditionally on
$X_{\tau_{0}^{+}-}=-x$ and $X_{\tau_{0}^{+}}=y$, the processes
$(-X_{(\tau_{0}^{+}-t)-},\,0\le t<\tau_0^+)$
and
$(X_{\tau_0^++t},\,0\le t<T_0-\tau_0^+)$ are independent; moreover, they have the same laws as $X$ started
from $x$ and $y$, respectively, and  killed upon hitting $0$.
Hence, combining this  together with \eqref{eqn2.22}  and then taking them back into \eqref{eqn2.2}, we get the desired result.
\qed
\begin{corollary}\label{cor3.3}
	For any  $q_1,q_2\geq 0$, we have
	\beqnn
	\mathbf{E}\big[
	\exp\{-q_1\tau_{0}^+ -q_2(T_0-\tau_{0}^+)\}
	;\tau_0^{+}<\infty
	\big]=  1-b^{-1}\frac{q_1-q_2}{\Phi(q_1)-\Phi(q_2)}.
	\eeqnn
\end{corollary}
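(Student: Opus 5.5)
Set $h_1=h_2=0$ in Lemma~\ref{lem3.2}. Since $X$ has no negative jumps, $H_0\ge 0$ and $\hat H_0\ge0$ hold automatically (for $h_i=0$ we read the events as non-restrictive, or let $h_i\downarrow 0$ and use monotone convergence). We obtain
\begin{equation*}
\mathbf{E}\big[e^{-q_1\tau_0^+-q_2(T_0-\tau_0^+)};\tau_0^+<\infty\big]
=b^{-1}\int_0^\infty dx\int_0^\infty \mathbf{E}_x\big[e^{-q_1\tau_0^-}\big]\,\mathbf{E}_y\big[e^{-q_2\tau_0^-}\big]\,\nu(dy+x).
\end{equation*}
The process has no negative jumps, so it creeps downward. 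Hence $\mathbf{E}_x[e^{-q\tau_0^-}]=e^{-x\Phi(q)}$, where $\tau_0^-$ here is the hitting time of $0$ from $x>0$. The right-hand side therefore becomes
\begin{equation*}
b^{-1}\int_0^\infty dx\int_{(x,\infty)} e^{-x\Phi(q_1)}e^{-(z-x)\Phi(q_2)}\,\nu(dz),
\end{equation*}
after substituting $z=x+y$.

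Next apply Fubini and integrate in $x\in(0,z)$ first. Write $\Phi_i=\Phi(q_i)$ and assume $q_1\ne q_2$, so that $\Phi_1\neq\Phi_2$ because $\Phi$ is strictly increasing. The inner integral is
\begin{equation*}
\int_0^z e^{-x\Phi_1-(z-x)\Phi_2}\,dx=\frac{e^{-z\Phi_2}-e^{-z\Phi_1}}{\Phi_1-\Phi_2}.
\end{equation*}
Integrating against $\nu(dz)$ and writing $e^{-z\Phi_2}-e^{-z\Phi_1}=(e^{-z\Phi_2}-1)-(e^{-z\Phi_1}-1)$, the definition \eqref{eqn2.01} gives
\begin{equation*}
\int_0^\infty(e^{-z\Phi_i}-1)\,\nu(dz)=\psi(\Phi_i)-b\Phi_i=q_i-b\Phi_i .
\end{equation*}
Here we use $\psi(\Phi(q))=q$ for $q\ge0$. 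This holds for $q>0$ by continuity and convexity, and at $q=0$ because $\Phi(0)$ is a root of $\psi$. The whole expression then equals
\begin{equation*}
b^{-1}\,\frac{(q_2-b\Phi_2)-(q_1-b\Phi_1)}{\Phi_1-\Phi_2}=1-b^{-1}\frac{q_1-q_2}{\Phi_1-\Phi_2},
\end{equation*}
which is the claim.

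The only delicate points are bookkeeping. First, the identity $\mathbf{E}_x[e^{-q\tau_0^-}]=e^{-x\Phi(q)}$ must be valid also at $q=0$. In the transient case it correctly gives $e^{-x\Phi(0)}$ with $\Phi(0)=0$, because $X$ drifts to $-\infty$, so the killed-at-zero interpretation is consistent. Second, the case $q_1=q_2$ is handled by reading the formula as the limit, $1-b^{-1}/\Phi'(q_1)$, which follows by continuity (dominated convergence on the left). Fubini is justified because all integrands are nonnegative and $\nu$ is finite.
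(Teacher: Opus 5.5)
Your proof is correct and follows the paper's argument. Like the paper, you insert $\mathbf{E}_x[e^{-q\tau_0^-}]=e^{-\Phi(q)x}$ into Lemma~\ref{lem3.2}, substitute $z=x+y$, swap the order of integration, and evaluate $\int_0^\infty\big(e^{-\Phi(q_2)z}-e^{-\Phi(q_1)z}\big)\,\nu(dz)$ via \eqref{eqn2.01} and $\psi(\Phi(q_i))=q_i$; your treatment of the degenerate cases ($h_i=0$, $q_i=0$, and $q_1=q_2$ read as the limit $1-b^{-1}/\Phi'(q_1)$) covers points the paper passes over silently.
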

\proof 
It follows from  Theorem 3.12 in \cite[p.85]{Kyprianou2014} that for any $q>0$ and $x\geq 0$, $\mathbf{E}_x\big[e^{-q\tau_0^-}\big]=e^{-\Phi(q)x}$. 
Combining this  with Lemma~\ref{lem3.2} yields  
\beqnn
\mathbf{E}\big[
\exp\{-q_1\tau_{0}^+ -q_2(T_0-\tau_{0}^+)\}
;\tau_0^{+}<\infty
\big]	\ar=\ar b^{-1}   \int_{0}^{\infty}
e^{-\Phi(q_1)x}dx\int_{0}^{\infty} e^{-\Phi(q_2)y}
\nu(dy+x).
\eeqnn
Making the change of variables
$z=x+y,x=x$ and then  changing the order of integration, we see that the double integral reduces to  
$ \int_{0}^{\infty}\big(
e^{-\Phi(q_2)x}- e^{-\Phi(q_1)x}\big)
\nu(dx)/(\Phi(q_1)-\Phi(q_2)),
$
which equals $b - (q_1 - q_2)/( \Phi(q_1) - \Phi(q_2) )$ by  \eqref{eqn2.01}. 
Combining the previous results completes the proof.
\qed

  \section{Proof of main results} 
\label{Sec.TransientCase}
\setcounter{equation}{0}
The equalities in Theorems~\ref{Mainresult02}, \ref{MainThm01}, \ref{Mainresult005} and \ref{MainThm02} follow directly from Lemma~\ref{lem3.2}. Hence, in the following proofs, we focus on the asymptotic properties.
\subsection{ The recurrent case.}
In this section, we prove  the main asymptotic results for  compound Poisson processes under Condition \ref{Assumption01}.
We first recall several auxiliary properties of $X$.
The scale function $W=\{W(x): x\in\mathbb{R} \}$ associated to $\psi$ is a non-negative function, which is identically zero on $(-\infty,0)$ and characterized on $[0,\infty)$ as a continuous, strictly increasing function whose Laplace transform is given by $1/\psi(q)$, $q>0$.
This along with integration by parts  shows that  
\beqnn
\int_0^\infty  e^{-q x}\, dW(x) 
=  \int_0^\infty \lambda e^{-q x} \big(W(x)-W(0)\big)\, dx 
=\frac{q}{ \psi(q )}-W(0),
\eeqnn
which goes to $\infty$ as $q \to 0$ and also belongs to $\mathrm{RV}^0_{1-\alpha}$; see Proposition 6 in  \cite[p.192]{Bertoin1996}.
An alternative representation of $W$ is provided by identity~(8.22) in
\cite[p.~239]{Kyprianou2014}, namely,
\beqlb\label{eqn.500}
W(x)=  W(1)\exp\Big\{ \int_1^x \underline{n}(\overline\epsilon>y)\, dy \Big\} ,\quad x\geq 0,
\eeqlb
where $\underline n$ denotes the excursion measure of $X$ reflected at its past infimum and $\overline\epsilon$ denotes the excursion height.
By the Tauberian theorem; see \cite[p.10]{Bertoin1996}, we have $W \in \mathrm{RV}^\infty_{\alpha-1}$.  
Hence from  \eqref{eqn.500} and the Karamata representation theorem; see Theorem~1.3.1 in \cite[p.12]{BinghamGoldieTeugels1987},  we have  as $h\to\infty$,
\beqlb\label{a2}
\underline{n}\big(\overline{\epsilon}>h\big) \sim (\alpha-1)\cdot h^{-1}.
\eeqlb
The two-sided exit identity (see \cite[p.~234]{Kyprianou2014}) gives
\beqlb\label{eqn2.5}
\mathbf{P}_{x}(H_0>h)=1-\mathbf{P}_x( \tau_h^{+}>\tau_0^{-})
=1-W(h-x)/W(h), \quad h\geq  x>0.
\eeqlb

\subsubsection{Proof of Theorem \ref{Mainresult02}} 
For the first claim, by Corollary \ref{cor3.3}, 
$
\mathbf{E}\big[1-e^{-q\tau_0^+}\big]=b^{-1}\cdot  q/ \Phi	(q)\in\mathrm{RV}_{1-\rho}^{0}.
$
Then applying Karamata's Tauberian theorem (see Corollary 8.1.7 in \cite[p.334]{BinghamGoldieTeugels1987})  yields the first claim.
We now turn to prove the second claim. From Lemma~\ref{lem3.2}  and \eqref{eqn2.5}, we have for any $\delta>0$,  
\beqnn
\mathbf{P}\big(H_0>h\big)
= b^{-1} \int_{0}^{\infty}\mathbf{P}_{x}(H_0>h)\overline{\nu}(x)dx=b^{-1}\sum_{i=1}^{3}I_{i}^{\delta},
\eeqnn
where
$
I_{1}^{\delta}=\int_{0}^{\delta h}\big(1-W(h-x)/W(h)\big)\overline{\nu}(x)dx 
$,  $I_{2}^{\delta}=\int_{\delta h}^{h}\big(1-W(h-x)/W(h)\big)\overline{\nu}(x)dx
$
and 
\beqnn I_{3}^{\delta}=\int_{h}^{\infty}\big(1-W(h-x)/W(h)\big)\overline{\nu}(x)dx.\eeqnn
Firstly,    \eqref{eqn.500} induces that
\beqnn
I_{1}^{\delta}\leq \int_{0}^{\delta h}\overline{\nu}(x)dx\int_{h-x}^{h}\underline{n}(\overline{\epsilon}>y)dy\leq \underline{n}(\overline{\epsilon}>(1-\delta )h) \int_{0}^{\delta h}x\overline{\nu}(x)dx.
\eeqnn
From \eqref{a2} and  Proposition 1.5.8 in \cite[p.26]{BinghamGoldieTeugels1987}, we  get  as $h\to\infty$,
$
\underline{n}(\overline{\epsilon}>(1-\delta )h)\sim   (\alpha - 1)/((1-\delta)h)$  and  
$\int_{0}^{\delta h}x\overline{\nu}(x)dx\sim (\delta h)^2 \overline{\nu}(\delta h) /(2-\alpha).
$
Based on these two asymptotic equivalences, we obtain that
\beqlb\label{eqn4.28}
\lim_{\delta\to 0+}\lim_{h\to\infty}\frac{I_{1}^{\delta}}{h\overline{\nu}(h)}=\lim_{\delta\to0+} \frac{\alpha-1}{2-\alpha}\cdot\frac{\delta^{2-\alpha}}{1-\delta}=0.
\eeqlb
Secondly, it is easy to see that 
$
I_{2}^{\delta}=  \int_{\delta}^{1}\big(1-W(h(1-y))/W(h)\big)\cdot\overline{\nu}(hy)/\overline{\nu}(h)dy.
$
Recall that $W \in \mathrm{RV}^\infty_{\alpha-1}$ and $\overline{\nu} \in \mathrm{RV}^\infty_{-\alpha}$, then  by  Theorem 1.5.2 in \cite[p.22]{BinghamGoldieTeugels1987}, we induce that uniformly in  $y\in(\delta,1)$, as $h \to\infty$, 
\beqlb\label{a5}
1-W(h(1-y))/W(h)\to  1-(1-y)^{\alpha-1}\quad\text{and}\quad \overline{\nu}(hy)/\overline{\nu}(h)\to y^{-\alpha}.
\eeqlb
Hence, we obtain
\beqnn
\lim_{\delta\to 0+}\lim_{h\to\infty}\frac{I_{2}^{\delta}}{h\overline{\nu}(h)}= \int_{0}^{1}\frac{1-(1-y)^{\alpha-1}}{y^{\alpha}}dy= -\frac{1}{\alpha-1}+\Gamma\big(2-\alpha\big)\Gamma\big(\alpha-1\big).
\eeqnn
Finally,  
by  Proposition~1.5.9 in \cite[p.~27]{BinghamGoldieTeugels1987}, we have  as $h\to\infty$, 
\beqlb\label{eqn2.15}
I_{3}^{\delta}=\int_{h}^{\infty}\overline{\nu}(x)dx\sim\frac{h\overline{\nu}(h)}{\alpha-1}.
\eeqlb
Putting these three estimates together, we have finished the proof.
\qed
\subsubsection{Proof of Theorem \ref{MainThm01}}

\noindent\textbf{Proof of Theorem \ref{MainThm01}(1).}
Note that 
\beqnn
\mathbf{P}(T_0-\tau_{0}^{+}>at\,|\,\tau_{0}^{+}>t)=1+\frac{\mathbf{P}(T_0-\tau_{0}^{+}>at)}{\mathbf{P}(\tau_{0}^{+}>t)}-\frac{1-\mathbf{P}(T_0-\tau_{0}^{+}\leq at, \tau_{0}^{+}\leq t)}{\mathbf{P}(\tau_{0}^{+}>t)}.
\eeqnn
From Theorem   \ref{Mainresult02}(1), we obtain  as $t\to\infty$,
\beqnn
\frac{\mathbf{P}(T_0-\tau_{0}^{+}>at)}{\mathbf{P}(\tau_{0}^{+}>t)}\to a^{\rho-1}.
\eeqnn
It remains to consider the asymptotic behavior of $1-\mathbf{P}(T_0-\tau_{0}^{+}\leq at, \tau_{0}^{+}\leq t)$.
Denote by 
\beqnn
K(x,y):=\int_{0}^{x}du\int_{0}^{y}\big(1-\mathbf{P}(\tau_{0}^{+}\leq u,T_0-\tau_{0}^{+}\leq v)\big)dv.
\eeqnn
From Corollary  \ref{cor3.3}, we have
\beqnn
\widehat{K}(q_1,q_2):=q_1q_2\iint_{\mathbb{R}_+^2}e^{-q_1 x-q_2 y}K(x,y)dxdy\ar=\ar\frac{1-\mathbf{E}\big[\exp\{-q_1 \tau_{0}^{+}-q_2(T_0-\tau_{0}^{+})\}\big]}{q_1q_2}=  \frac{b^{-1}}{q_1q_2}\cdot \frac{q_1-q_2}{\Phi(q_1)-\Phi(q_2)}.
\eeqnn
Therefore,  as $t\to\infty$,
\beqnn
\widehat{K}\big(q_1/t,q_1/t\big)=b^{-1} \frac{t^2}{q_1q_2}\cdot \frac{q_1/t-q_2/t}{\Phi(q_1/t)-\Phi(q_2/t)}\sim b^{-1}\frac{q_1-q_2}{q_1q_2(q_1^{\rho}-q_2^{\rho})}\cdot\frac{t}{\Phi(1/t)}.
\eeqnn
By Theorem 2.4 in \cite{de1984domains},  as $t\to\infty$,
\beqlb\label{a3}
K(tx,ty)\sim b^{-1} \lambda(x,y)\cdot \frac{t}{\Phi(1/t)},
\eeqlb
where $\lambda(x,y)$ is a non-negative function on $\mathbb{R}_{+}^{2}$ whose two-dimensional Laplace transform is given by
\beqlb\label{eqn3.11}
\iint_{\mathbb{R}_+^2} e^{-q_1 x-q_2 y}\lambda(x,y)\,dx\,dy
=\frac{q_1-q_2}{q_1^2 q_2^2(q_1^{\rho}-q_2^{\rho})}.
\eeqlb
From Theorem 2.3 in \cite{de1984domains}, 
the limit function $\lambda$ has a monotone density $d$ and such that as $t\to\infty$,
\beqlb\label{eqn3.5}
1-\mathbf{P}(\tau_{0}^{+}\leq tx,T_0-\tau_{0}^{+}\leq ty) \sim b^{-1} d(x,y)\cdot\frac{1}{t\Phi(1/t)}.
\eeqlb
Therefore, from Theorem \ref{Mainresult02}(1) and \eqref{eqn3.5}, we have as $t\to\infty$,
\beqnn
\frac{1-\mathbf{P}(\tau_{0}^{+}\leq at,T_0-\tau_{0}^{+}\leq t)}{\mathbf{P}(\tau_{0}^{+}>t)}\to\Gamma(\rho) d(1,a).
\eeqnn
Putting these results together, we see that  as $t\to \infty$,
$	\mathbf{P}(T_0-\tau_{0}^{+}>at \,|\, \tau_{0}^{+}>t)\to a^{\rho-1}+1-\Gamma(\rho) d(1,a).$
It remains to  prove that $d(1,a)$  equals to $f(a)$, where $f$ is defined in \eqref{eqn1.2}.
From \eqref{a3} and the fact that $\Phi(1/t)/t \in \mathrm{RV}_{\rho}^{\infty}$,  
it holds that 
$\lambda$ is  homogeneous of order $\rho+1$, that is, for any $a>0$, 
$
\lambda(ax,ay)=a^{\rho+1}\lambda(x,y).
$
Consequently, its density $d$  is  homogeneous of order $\rho-1$, namely
$
d(ax,ay) = a^{\rho-1} d(x,y)$,  which yields that  $d(x,y)=x^{\rho-1} d(1,y / x) .$  Setting  $q_1=q>0,\  q_2 =1$ in \eqref{eqn3.11} and then  making a  change of  variables  $x=x$ and $s=y/x$ on the left-side hand, we obtain  for any $q>0$,
\beqnn
\Gamma(\rho+1) \int_0^{\infty} \frac{d(1,s)}{(q+s)^{\rho+1}} ds=\frac{q-1}{q(q^\rho-1)}.
\eeqnn
Hence,  for any $q>0$,
\beqlb\label{eqn3.144}
\Gamma(\rho+1) \int_0^{\infty} \frac{d(1,s)-s^{\rho-1} / \Gamma(\rho)}{(q+s)^{\rho+1}} d s=\frac{q-1}{q(q^\rho-1)}-\frac{1}{q}=\frac{1-q^{\rho-1}}{q^\rho-1}=:D(q).
\eeqlb
By  Theorem 1 in  \cite[p.72]{berg2006quelques}, we  can verify that  $D(q)$  is a Stieltjes function.
From the inverse formula of Stielties transform, we have  for any $q>0$, 
\beqnn
D(q)=\frac{1-q^{\rho-1}}{q^\rho-1}=\int_0^{\infty} \frac{m(u)}{q+u} d u,
\eeqnn
where 
\beqnn
m(u):=\lim _{\varepsilon \rightarrow 0^{+}} \frac{-\mathrm{Im} D(-u+i\varepsilon)}{\pi} =\frac{\sin (\pi \rho)}{\pi} \frac{(1+u) u^{\rho-1}}{u^{2 \rho}-2 u^\rho \cos (\pi \rho)+1}.
\eeqnn
Therefore,    for any $q>0$, 
\beqlb\label{eqn3.15}
D(q)=\Gamma(\rho+1)\int_0^{\infty} \frac{h(s)/\Gamma(\rho)}{(q+s)^{\rho+1}} d s,
\eeqlb
where 
\beqnn
h(s):=\frac{\sin (\pi \rho)}{\pi} \int_0^s \frac{(s-u)^{\rho-1} (1+u) u^{\rho-1}}{u^{2 \rho}-2 u^\rho \cos (\pi \rho)+1} d u.
\eeqnn
By \eqref{eqn3.144} and \eqref{eqn3.15}, it follows from the uniqueness of the generalized Stieltjes transform that
\beqlb\label{eqn3.14}
d(1,s)=s^{\rho-1}/\Gamma(\rho)
+h(s)/\Gamma(\rho)
\eeqlb
for almost every $s>0$.
Recall that  $d(1,s)$ is decreasing in $s$ and the function in the  right-hand side of  \eqref{eqn3.14} is continuous, so \eqref{eqn3.14} holds for any $s>0$.  
Finally, we show that   $ a^{\rho-1}+1-\Gamma(\rho) f(a)=1-h(a)\in (0,1)$.
Using
\beqnn
\frac{(1+u) u^{\rho-1}}{u^{2 \rho}-2 u^\rho c+1} \leq u^{-\rho}, \quad u>0,
\eeqnn
we obtain
$0<h(a)< \frac{\sin (\pi \rho)}{\pi} \int_0^a(a-u)^{\rho-1} u^{-\rho} d u=1,
$
which completes the proof. \qed

Before proving Theorem \ref{MainThm01}(2), we  give the following  useful equations: for any bounded function $F$ and $G$
\beqlb
\int_{0}^{\infty}G(x)dx\int_{0}^{\infty}F(y)\nu(dy+x)\ar = \ar \int_{0}^{\infty}G(x)dx\int_{x}^{\infty}F(z-x)\nu(dz) \label{eqn3.1}\\  
\ar=\ar \int_{0}^{\infty}F(w)dw\int_{w}^{\infty}G(z-w)\nu(dz), \label{eqn3.2}
\eeqlb
where the first equation is obtained by making a change of variables $z=y+x$, $x=x$, and the second equation follows by $z=z$, $w=z-x$.

\noindent\textbf{Proof of Theorem \ref{MainThm01}(2).}  
By Lemma \ref{lem3.2}, we obtain
\beqlb\label{eqn3.101}
\mathbf{P}\big(H_0>ah \,|\, \widehat{H}_0>h\big)
=\frac{   \int_{0}^{\infty}\mathbf{P}_x(H_0>ah)dx\int_{0}^{\infty}\mathbf{P}_{y}(H_0>h)\nu(dy+x)}{\int_{0}^{\infty}\mathbf{P}_{y}(H_0>h)\overline{\nu}(y)dy}.
\eeqlb
On the one hand, from the proof of Theorem \ref{Mainresult02}(2) that  as $h\to\infty$,
\beqlb\label{a8}
\int_{0}^{\infty}\mathbf{P}_{y}(H_0>h)\overline{\nu}(y)dy\sim \Gamma(2-\alpha)\Gamma(\alpha-1)\cdot h\overline{\nu}(h).
\eeqlb
On the other hand, for any $\delta>0$, the numerator of \eqref{eqn3.101} can be decomposed into seven terms: $\sum_{i=1}^{7}K_i^{\delta}$, where   
$
K_i^{\delta}:=\iint_{A_i}\mathbf{P}_{x}(H_0>ah)\mathbf{P}_{y}(H_0>h)\nu(dy+x)dx
$
and  $\{A_i \, ; 1\leq i\leq 7\}$  is   a partition of $\mathbb{R}_{+}^{2}$; see Figure~\ref{Figure02}.

\begin{center}
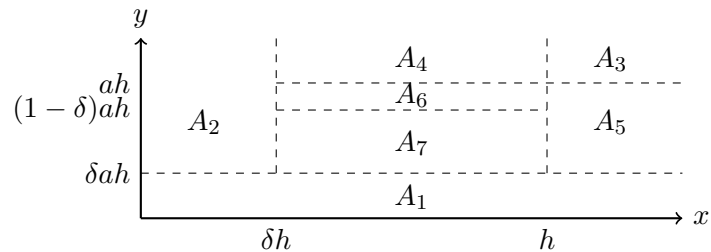

	\begin{tikzpicture}[scale=0.4, xscale=3,yscale=1]
		\def\d{1.5}      
		\def\H{4.5}      
		\def\Hm{3.6}     
		\draw[->, thick] (0,0) -- (6,0) node[right] {$x$};
		\draw[->, thick] (0,0) -- (0,6) node[above] {$y$};
		
		\draw[dashed] (\d,\d) -- (\d,6);
		\draw[dashed] (\H,\d) -- (\H,6);
		
		\draw[dashed] (0,\d) -- (6,\d);
		\draw[dashed] (\d,\Hm) -- (\H,\Hm);
		\draw[dashed] (\d,\H) -- (6,\H);
		
		\node[left]  at (0,\d) {$\delta ah$};
		\node[left]  at (0,\Hm) {$(1-\delta)ah$};
		\node[left]  at (0,\H) {$ah$};
		
		\node[below] at (\d,0) {$\delta h$};
		\node[below] at (\H,0) {$h$};
		
		\node at (3,0.7) {$A_1$};
		
		\node at (0.7,3.2) {$A_2$};
		\node at (3,2.5) {$A_7$};
		\node at (5.2,3.2) {$A_5$};
		
		\node at (3,4.1) {$A_6$};
		
		\node at (3,5.3) {$A_4$};
		\node at (5.2,5.3) {$A_3$};
	\end{tikzpicture}
	\captionof{figure}{\small Partition of $\mathbb{R}_{+}^2$ into seven regions. Here 
		$A_1=\{0<x<\infty, 0<y<\delta ah\}$, 
		$A_2=\{0<x<\delta h, y>\delta ah\}$, 
		$A_3=\{x>h, y>ah\}$, 
		$A_4=\{\delta h<x<h, y>ah\}$, 
		$A_5=\{x>h, \delta ah<y<ah\}$, 
		$A_6=\{\delta h<x<h, (1-\delta)ah<y< ah\}$, 
		$A_7=\{\delta h<x<h, \delta ah<y<(1-\delta) ah\}$.\label{Figure02}}
\end{center}
We first prove that $K_1^{\delta}$, $K_2^{\delta}$ and $K_6^{\delta}$ are $o(h\overline{\nu}(h))$, as $h\to\infty$ and then $\delta\to 0$.
Applying \eqref{eqn3.2} with $F(\cdot)=\mathbf{P}_{\cdot}(H_0>ah)\cdot\mathbf{1}_{[0,\delta ah]}(\cdot)$ and $G\equiv 1$, we have
\beqnn
K_1^{\delta}\leq \int_{0}^{\infty}dx\int_{0}^{\delta ah}\mathbf{P}_{y}(H_0>ah)\nu(dy+x)=\int_{0}^{\delta ah}\mathbf{P}_{w}(H_0>ah)\overline{\nu}(w)dw.
\eeqnn
We can also notice that
\beqnn
K_2^{\delta}\leq\int_{0}^{\delta h}\mathbf{P}_{x}(H_0>h)dx\int_{0}^{\infty}\nu(dy+x)=\int_{0}^{\delta h}\mathbf{P}_{x}(H_0>h)\overline{\nu}(x)dx.
\eeqnn
Thus from \eqref{eqn4.28}, we obtain $K_1^{\delta}+K_2^{\delta}=o(h\overline{\nu}(h))$ as $h\to\infty $ and then $\delta \to 0+$.
For $K_6^{\delta}$, we can apply \eqref{eqn3.2} with $F(\cdot)=\mathbf{P}_{\cdot}(H_0>ah)\cdot\mathbf{1}_{[(1-\delta )ah,ah]}(\cdot)$ and $G\equiv 1$,
\beqnn
K_6^{\delta} \leq \int_{0}^{\infty}dx\int_{(1-\delta )ah}^{ah}\mathbf{P}_{y}(H_0>ah)\nu(dy+x)=\int_{(1-\delta)ah}^{ah}\mathbf{P}_{w}(H_0>ah)\overline{\nu}(w)dw\leq\delta ah\, \overline{\nu}((1-\delta)ah),
\eeqnn
which implies that $K_6^{\delta}=o(h\overline{\nu}(h))$ as $h\to\infty $ and then $\delta \to 0+$.
Next, we estimate the remaining terms respectively. It follows from   Proposition~1.5.9 in \cite[p.~27]{BinghamGoldieTeugels1987} that as $h\to\infty$, 
\beqnn
K_3^{\delta}=\int_{h}^{\infty}dx\int_{ah}^{\infty}\nu(dy+x)=\int_{(a+1)h}^{\infty}\overline{\nu}(u)du\to\frac{(a+1)^{1-\alpha}}{\alpha-1}.
\eeqnn
We now turn to consider $K_4^{\delta}$ and $K_5^{\delta}$, which can be written as
\beqnn
K_4^{\delta}\ar=\ar\int_{\delta h}^{h}\mathbf{P}_{x}(H_0>h)\overline{\nu}(ah+x)dx\quad\text{and}\quad K_5^{\delta}=\int_{\delta a h}^{ah}\mathbf{P}_{x}(H_0>ah)\overline{\nu}(h+x)dx.
\eeqnn
The expression for $K_4^{\delta}$ is straightforward, while   $K_5^{\delta}$ is handled by applying  \eqref{eqn3.2} with $F(\cdot)=\mathbf{P}_{\cdot}(H_0>ah)\cdot\mathbf{1}_{[\delta ah, ah]}(\cdot)$, $G=\mathbf{1}_{[h,\infty)}(\cdot)$.
It follows from \eqref{eqn2.5} and \eqref{a5} that 
\beqnn
\lim_{\delta\to 0}\lim_{h\to\infty}\frac{K_4^{\delta}}{h\overline{\nu}(h)}
\ar=\ar \lim_{\delta\to 0}\lim_{h\to\infty}\int_{\delta }^{1}\big(1-W(h(1-x))/W(h)\big)\cdot\overline{\nu}(h(x+a))/\overline{\nu}(h)dx\cr\cr
\ar=\ar  \int_{0}^{1}(1-(1-x)^{\alpha-1})(x+a)^{-\alpha}dx=
-\frac{(a+1)^{1-\alpha}}{\alpha-1}+\int_{0}^{1}(x+a)^{1-\alpha}(1-x)^{\alpha-2}dx.
\eeqnn
and 
\beqnn
\lim_{\delta\to 0}\lim_{h\to\infty}\frac{K_5^{\delta}}{h\overline{\nu}(h)}
\ar=\ar \lim_{\delta\to 0}\lim_{h\to\infty}a\int_{\delta }^{1}\big(1-W(ah(1-x))/W(ah)\big)\cdot\overline{\nu}(h(1+ax))/\overline{\nu}(h)dx\cr\cr \ar=\ar a\int_{0}^{1}(1-(1-x)^{\alpha-1})(1+ax)^{-\alpha}dx=
-\frac{(a+1)^{1-\alpha}}{\alpha-1}+\int_{0}^{1}(ax+1)^{1-\alpha}(1-x)^{\alpha-2}dx.
\eeqnn
Finally, we estimate $K_7^{\delta}$. Applying \eqref{eqn3.1} with $F(\cdot)=\mathbf{P}_{\cdot}(H_0>ah)\cdot\mathbf{1}_{[\delta ah,(1-\delta) ah]}(\cdot)$ and $G(\cdot)=\mathbf{P}_{\cdot}(H_0>h)\cdot\mathbf{1}_{[\delta h,(1-\delta) h]}(\cdot)$, we have
\beqnn
K_7^{\delta}\ar=\ar\int_{\delta h}^{h}\mathbf{P}_x(H_0>h)dx\int_{x+\delta ah}^{x+(1-\delta)ah}\mathbf{P}_{z-x}(H_0>ah)\nu(dz)\cr\cr
\ar=\ar h\int_{\delta}^{1}\mathbf{P}_{xh}(H_0>h)dx\int_{x+a\delta}^{x+a(1-\delta)}\mathbf{P}_{(z-x)h}(H_0>ah)\nu(h\cdot dz).
\eeqnn
Denote the inner integral by
\beqnn
I:=\int_{x+a\delta}^{x+a(1-\delta)}\mathbf{P}_{(z-x)h}(H_0>ah)\nu(h\cdot dz)=\int_{x+a\delta}^{x+a(1-\delta)}\big(1-W(ah-(z-x)h)/W(ah)\big)\nu(h\cdot dz),
\eeqnn
where the second equality follows from \eqref{eqn2.5}.
Note that by  \eqref{a5}, we have as $h\to\infty$, uniformly in $z\in[u+a\delta,u+a(1-\delta)]$, 
\beqnn
1-W(ah-(z-x)h)/W(ah)\to 1-\big(1-(z-x)/a\big)^{\alpha-1}
\quad \text{and}\quad 
\overline{\nu}(hz)/\overline{\nu}(h)\to  z^{-\alpha}.
\eeqnn
This allows us to apply the Helly-Bray theorem to get that as $h\to\infty$,
\beqnn
\frac{I}{\overline{\nu}(h)}=\int_{x+a\delta}^{x+a(1-\delta)}\mathbf{P}_{(z-x)h}(H_0>ah)\nu(h\cdot dz)/\overline{\nu}(h)\to\alpha\int_{x+a\delta}^{x+a(1-\delta)}\big[1-\big(1-(z-x)/a\big)^{\alpha-1}\big]z^{-\alpha-1}dz.
\eeqnn
Moreover,   $I$ can be bounded by
$\overline{\nu}\big(h(x+a\delta)\big)-\overline{\nu}\big(h(x+a(1-\delta))\big)$,
thus we can apply the dominated convergence theorem to $K_7^{\delta}$ to obtain that 
\beqnn
\lim_{\delta\to 0}\lim_{h\to\infty}\frac{K_7^{\delta}}{h\overline{\nu}(h)}\ar=\ar\alpha\int_{0}^{1}(1-(1-x)^{\alpha-1})dx\int_{x}^{x+a}\big[1-\big(1-(z-x)/a\big)^{\alpha-1}\big]z^{-\alpha-1}dz\cr\cr\ar=\ar
a\alpha\int_{0}^{1}(1-(1-x)^{\alpha-1})dx\int_{0}^{1}(1-(1-y)^{\alpha-1})(ay+x)^{-\alpha-1}dy.
\eeqnn
Using the identity
$
1-(1-x)^{\alpha-1}=(\alpha-1)\int_{0}^{x}(1-s)^{\alpha-2}ds,
$
the integral in the last line can be rewritten as
\beqnn
\lefteqn{a\alpha(\alpha-1)^2\int_{0}^{1}dx\int_{0}^{1}(ay+x)^{-\alpha-1}dy\int_{0}^{x}(1-s)^{\alpha-2}ds\int_{0}^{y}(1-t)^{\alpha-2}dt}\cr\cr\ar=\ar
a\alpha(\alpha-1)^2\int_{0}^{1}(1-s)^{\alpha-2}ds\int_{0}^{1}(1-t)^{\alpha-2}dt\int_{s}^{1}dx\int_{t}^{1}(ay+x)^{-\alpha-1}dy\cr\cr\ar=\ar
(\alpha-1)\int_{0}^{1}(1-s)^{\alpha-2}ds\int_{0}^{1}(1-t)^{\alpha-2} \big[(a+1)^{1-\alpha}+(at+s)^{1-\alpha}-(at+1)^{1-\alpha}-(a+s)^{1-\alpha}\big]dt.
\eeqnn
We finally get that
\beqnn
\lim_{\delta\to 0}\lim_{h\to\infty}\frac{K_7^{\delta}}{h\overline{\nu}(h)}\ar=\ar \frac{(a+1)^{1-\alpha}}{\alpha-1}+(\alpha-1)\int_{0}^{1}x^{\alpha-2}dx\int_{0}^{1}y^{\alpha-2}(a+1-ay-x)^{1-\alpha}dy\cr\cr\ar\ar
-\int_{0}^{1}(ax+1)^{1-\alpha}(1-x)^{\alpha-2}dx-\int_{0}^{1}(x+a)^{1-\alpha}(1-x)^{\alpha-2}dx.
\eeqnn
Putting these estimates together, we get that as $h\to\infty$,
\beqlb\label{a6}
\int_{0}^{\infty}\mathbf{P}_x(H_0>ah)dx\int_{0}^{\infty}\mathbf{P}_{y}(H_0>h)\sim
(\alpha-1)\int_{0}^{1}x^{\alpha-2}dx\int_{0}^{1}y^{\alpha-2}(a+1-ay-x)^{1-\alpha}dy\cdot  h\overline{\nu}(h).
\eeqlb
Substituting \eqref{a8} and \eqref{a6} into \eqref{eqn3.101}, we get that 
$
\lim_{h\to\infty}\mathbf{P}\big(H_0>ah \,|\, \widehat{H}_0>h\big)=g(a),
$
where  $g$ is defined in \eqref{eqn1.3}.
Moreover, it is easy to verify that $0<g(a)<\frac{\alpha-1}{\Gamma(2 -\alpha) \Gamma(\alpha-1)} \int_0^1 x^{\alpha-2} d x \int_0^1 y^{\alpha-2}(1-y)^{1-\alpha} d y=1.$
The desired result holds.
\qed
%

\subsection{The  transient case.}
This section is devoted to the proofs of Theorems~\ref{Mainresult005} and \ref{MainThm02}. We first establish two auxiliary lemmas on asymptotic behavior and uniform upper bounds for the tail probabilities of  $\tau_0^-$ and $H_0$.

\begin{lemma}\label{Lemma303}
	Under Condition \ref{Assumption02}, we have  as $t\to\infty$, 
	$
	\mathbf{P}_x(  \tau_0^->t )\sim  \mathbf{P}_x(  H_0>\beta t)
	\sim x \overline\nu(\beta t)/\beta.
	$
\end{lemma}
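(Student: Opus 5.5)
The plan is to prove Lemma~\ref{Lemma303} through the one-big-jump heuristic, which is made rigorous by a lower and an upper bound. Under $\mathbf{P}_x$ the process drifts to $-\infty$ at speed $\beta$. So $\tau_0^->t$, or $H_0>\beta t$, essentially requires one jump of size about $\beta t$ before $\tau_0^-$. The time spent before such a jump is of order $x/\beta$.

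\emph{Lower bound.} Fix $\varepsilon>0$. Let $N$ be the jump counting measure, and let $J$ be the first jump of size at least $(1+\varepsilon)\beta t$, occurring at time $\sigma$. On the event $\{\sigma<\tau_0^-\}$, the strong Markov property at $\sigma$ gives $X_\sigma\ge(1+\varepsilon)\beta t$. By the law of large numbers, $\mathbf{P}_{y}(\tau_0^->t)\to1$ uniformly in $y\ge(1+\varepsilon)\beta t$, since $X$ started at $y$ drifts at speed $\beta$. The same holds for $H_0>\beta t$, which is immediate in that case. The intensity of big jumps is $\overline{\nu}((1+\varepsilon)\beta t)$. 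Hence, by the compensation formula,
\[
\mathbf{P}_x(\sigma<\tau_0^-)=\mathbf{E}_x\Big[\int_0^{\tau_0^-}\overline{\nu}((1+\varepsilon)\beta t)\,\mathbf{1}\{\text{no earlier big jump}\}\,ds\Big]\sim \overline{\nu}((1+\varepsilon)\beta t)\,\mathbf{E}_x[\tau_0^-].
\]
Wald's identity gives $\mathbf{E}_x[\tau_0^-]=x/\beta$, because $X$ creeps downward (there are no negative jumps), so $X_{\tau_0^-}=0$ and $\mathbf{E}_x[\tau_0^-]=1/\Phi'(0)\cdot x=x/\beta$. Letting $\varepsilon\to0$ and using regular variation yields the lower bound $\liminf \ge x\overline{\nu}(\beta t)/\beta$.

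\emph{Upper bound.} Split according to whether a jump of size greater than $\varepsilon\beta t$ occurs before $\tau_0^-$.

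\emph{No such jump.} The process with jumps truncated at $\varepsilon\beta t$ still has drift close to $-\beta$, because $\int_{\varepsilon\beta t}^\infty\overline{\nu}=o(1)$. I would show that reaching height $\beta t$, or surviving past time $t$, has probability $o(\overline{\nu}(t))$. For that, an exponential-martingale / Chernoff bound for truncated jumps (a Fuk--Nagaev type estimate) gives a bound like $(C/(t\overline{\nu}(\varepsilon t)))^{-1/\varepsilon}$-ish decay, which is $o(\overline{\nu}(t))$ once $\varepsilon$ is small. I expect this step to be the main obstacle, especially for the time variable, where I must control $\mathbf{P}_x(X_s>0,\ s\le t)$ on the event of small jumps.

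\emph{Exactly one big jump.} If the jump of size $z\ge\varepsilon\beta t$ happens at a time $s$ when the position is $X_{s-}=y$, then the required event forces $y+z\gtrsim \beta t$, or $\beta(t-s)$ in the time case. Given the small-jump control, $s$ and $y$ stay $O(1)$ in probability, so the same compensation formula gives $\le \mathbf{E}_x[\tau_0^-]\,\overline{\nu}((1-\varepsilon)\beta t)(1+o(1))$.

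\emph{Two or more big jumps.} This contributes $O\big(\mathbf{E}_x[(\tau_0^-)^2]\,\overline{\nu}(\varepsilon\beta t)^2\big)=o(\overline{\nu}(t))$. To justify $\mathbf{E}_x[(\tau_0^-)^2]<\infty$ I would use that $\tau_0^-$ under $\mathbf{P}_x$ is a sum of i.i.d.\ first-passage times of a subordinator-type structure with mean $1/\beta$. If the second moment fails when $\theta<2$, I would instead truncate $\tau_0^-$ at $t$ and use $\mathbf{E}_x[\tau_0^-\wedge t]\,\overline{\nu}(\varepsilon t)\cdot t\overline{\nu}(\varepsilon t)\to0$ relative to $\overline{\nu}(t)$, since $t\overline{\nu}(t)\to0$ for $\theta>1$.

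Finally, let $\varepsilon\to0$ and combine the bounds to get both equivalences $\mathbf{P}_x(\tau_0^->t)\sim\mathbf{P}_x(H_0>\beta t)\sim x\overline{\nu}(\beta t)/\beta$.
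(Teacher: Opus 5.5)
Your strategy works in outline and takes a genuinely different route from the paper, but one step is wrong as written and one needs an argument you left implicit.

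\textbf{Comparison with the paper.} The paper proves nothing about $\tau_0^-$ by hand. It quotes Theorem~2.2 of Denisov--Shneer, $\mathbf{P}_x(\tau_0^->t)\sim\mathbf{E}_x[\tau_0^-]\,\overline\nu(\beta t)$, and then transfers this to $H_0$:
\begin{itemize}
\item the lower bound uses $\mathbf{P}_x(\tau_0^->t,H_0\le\beta t)\le\mathbf{P}_x(\tau_0^->t,\mathcal{J}^{\beta t}>t)=o(\mathbf{P}_x(\tau_0^->t))$, quoted from Xu;
\item the upper bound uses the strong Markov property at $\tau^+_{\beta t}$, giving $\mathbf{P}_x(H_0\ge\beta t)\le\mathbf{P}_x(\tau_0^->(1-\varepsilon)t)/\mathbf{P}_{\beta t}(\tau_0^-\ge(1-\varepsilon)t)$.
\end{itemize}
You instead reprove the first-passage asymptotics from scratch by a single-big-jump decomposition, treating both functionals in parallel. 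This is self-contained and shows where the constant $\mathbf{E}_x[\tau_0^-]=x/\beta$ comes from. Your lower bound is complete: the $\sim$ follows by dominated convergence, since $\tau_0^-\wedge\sigma\le\tau_0^-$. For $H_0$ it is simpler than the paper's, which needs Xu's estimate. The price is the upper bound, whose core is a truncated-jump exponential estimate with $L=\varepsilon\beta t$. That estimate is standard and easier than you fear:
\begin{itemize}
\item for the time event you only need $\mathbf{P}(x+X^{(L)}_t\ge 0)$ at the single time $t$, with no path control; a Chernoff bound at $\gamma=k/L$, $k=\eta\log(1/(L\overline\nu(L)))$, gives $(L\overline\nu(L))^{c/\varepsilon}=o(\overline\nu(t))$;
\item for the height event, Lundberg's inequality for $X^{(L)}$ with the same exponent gives the same bound.
\end{itemize}
Alternatively, once the time result is proved, the paper's strong Markov comparison gives the $H_0$ upper bound directly. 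A small slip: $\mathbf{E}_x[\tau_0^-]=\Phi'(0)\,x$, not $x/\Phi'(0)$.

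\textbf{The two-or-more-jumps bound is wrong, for every $\theta$.} After the first big jump the process sits above $\varepsilon\beta t$ and lives for a further time of order $\varepsilon t$. So a second big jump has conditional probability of order $\varepsilon t\,\overline\nu(\varepsilon\beta t)$, and $\mathbf{P}_x(\text{two big jumps before }\tau_0^-)\ge c\,t\,\overline\nu(\varepsilon\beta t)^2$. When $\theta>2$, so that $\mathbf{E}_x[(\tau_0^-)^2]<\infty$, this exceeds your bound by a factor of order $t$. Your fallback is the correct estimate and must be used in all cases. Write $\lambda=\overline\nu(\varepsilon\beta t)$ and let $N_s$ count big jumps up to time $s$. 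By the compensation formula, the expected number of pairs $r_1<r_2\le\tau_0^-\wedge t$ of big-jump times is $\lambda\,\mathbf{E}_x\big[\int_0^{\tau_0^-\wedge t}N_{s-}\,ds\big]\le\lambda^2 t\,\mathbf{E}_x[\tau_0^-]=o(\overline\nu(t))$, since $t\overline\nu(t)\to 0$. For $H_0$, cutting at time $t$ is not justified. Stop at $\tau_0^-\wedge\tau^+_{\beta t}$ instead: after each big jump the remaining lifetime has conditional mean at most $\mathbf{E}_{\beta t}[\tau_0^-]=t$. Simpler still, decompose only at the first big jump and let the post-jump process be the unrestricted $X$; then no two-jump term appears at all.

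\textbf{The one-jump step needs an explicit domination argument.} Saying that $s$ and $y$ are $O(1)$ in probability is enough only because of the following. The compensation integrand is at most $\overline\nu(\varepsilon\beta t)\asymp\overline\nu(t)$. Hence the region $\{s>\eta t\}\cup\{X_{s-}>\eta t\}$ contributes at most $\overline\nu(\varepsilon\beta t)$ times the expected occupation time of that region before $\tau_0^-$. That occupation time tends to $0$ by dominated convergence, with $\tau_0^-$ as the dominating variable. On the complementary region, the post-jump error only needs to be $o(1)$, uniformly over post-jump levels at most $(1-\eta)\beta(t-s)$. That follows from the law of large numbers for the time event, and from $\sup_s X_s<\infty$ a.s.\ for the height event.

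With these repairs your proof goes through.
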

\proof
For the first claim, by Theorem~2.2 in   \cite{DenisovShneer2013}, as $t\to\infty$, 
$
\mathbf{P}_{x}\big( \tau_0^->t\big) \sim \mathbf{E}_{x}[\tau_0^-] \cdot \overline{\nu}(\beta t). 
$
Moreover, from Proposition~17 in \cite[p.172]{Bertoin1996}, we  have  
$
\mathbf{E}_{x}[\tau_0^-]=x/\beta,$
which proves the first claim.  For the second claim,  it suffices to prove that  
$
\mathbf{P}_x(H_0>\beta t ) \sim \mathbf{P}_x(\tau_0^->t),
$
as $t\to \infty$.
On the one hand, notice that
\beqnn
\mathbf{P}_x(\tau_0^->t ,H_0\leq \beta t) \leq \mathbf{P}_x(\tau_0^->t ,\tau_{\beta t}^+>t )\leq \mathbf{P}_x(\tau_0^->t ,\mathcal{J}^{\beta t}>t),
\eeqnn where $
\mathcal{J}^{\beta t}:=\inf \{s \geq 0: \Delta X_s>\beta t\}.$
From this and Theorem~3.4 in \cite{Xu2021a}, we have 
$ \mathbf{P}_x(\tau_0^->t ,H_0\leq \beta t )=o( \mathbf{P}_x(\tau_0^->t  )),$ as $t\to\infty$
and hence
\beqlb\label{less}
\mathbf{P}_x( H_0> \beta t )\geq  \mathbf{P}_x(\tau_0^->t ,H_0> \beta t )\sim \mathbf{P}_x(\tau_0^->t  ).
\eeqlb
On the other hand, for any $\varepsilon\in(0,1)$ we have 
$
\mathbf{P}_x(\tau_0^-<(1-\varepsilon)t, H_0\geq \beta t )  =  \mathbf{P}_x( \tau_0^-<(1-\varepsilon)t, \tau_{\beta t}^+<\tau_0^- ).
$
Conditioned on the event $\{\tau_{\beta t}^+<\tau_0^-\}$, we can split $ \tau_0^-$ into $\tau_{\beta t}^+ $ and $\tau_0^- -\tau_{\beta t}^+ $, where $\tau_0^- -\tau_{\beta t}^+ $ is the downward crossing time of $X$ at $0$ after upcrossing at $\beta t$. 
From this and the strong Markov property of $X$,
\beqnn
\mathbf{P}_x(\tau_0^-<(1-\varepsilon)t, H_0\geq \beta t )   \ar\leq\ar \mathbf{P}_x\big(\tau_0^- -\tau_{\beta t}^+ <(1-\varepsilon)t, \inf_{0\le s\le \tau_{\beta t}^+} X_s>0 \big) \cr
\ar\ar\cr
\ar\leq\ar  \mathbf{P}_x(\tilde\tau_0^-(\beta t) <(1-\varepsilon)t, \inf_{0\le s\le \tau_{\beta t}^+} X_s>0 )= \mathbf{P}_{\beta t}( \tau_0^- <(1-\varepsilon)t)\cdot \mathbf{P}_x( H_0\geq \beta t  ), 
\eeqnn
where $\tilde\tau_0^-(\beta t) $ is the independent copy of the downward crossing time  of $X$ at $0$ starting from $\beta t$. 
Hence, 
\beqnn
\frac{\mathbf{P}_x( H_0\geq \beta t  )}{\mathbf{P}_x(\tau_0^->(1-\varepsilon)t)}\leq \frac{\mathbf{P}_{\beta t}( \tau_0^- <(1-\varepsilon)t)\cdot \mathbf{P}_x( H_0\geq \beta t  )}{\mathbf{P}_x(\tau_0^->(1-\varepsilon)t)}+1.
\eeqnn
Note that for any $z>\beta$, as $t\to\infty$, $X_t+(z+\beta)t/2\to\infty$ a.s., we have
\beqlb\label{3.84}
\mathbf{P}_{zt}( \tau_0^- > t)=\mathbf{P}\big(\inf_{0\leq s\leq t}X_s+(z+\beta)t/2>-(z-\beta) t/2\big)\to 1.
\eeqlb
From this we get that as $t\to \infty$, 
$
1-\mathbf{P}_{\beta t}( \tau_0^- <(1-\varepsilon)t)=\mathbf{P}_{\beta t}( \tau_0^- \geq(1-\varepsilon)t)\to 1.
$ Consequently, as $t\to\infty$,
\beqlb\label{eqn3.24}
\frac{\mathbf{P}_x( H_0\geq \beta t  )}{\mathbf{P}_x(\tau_0^->(1-\varepsilon)t)}\leq \frac{1}{1-\mathbf{P}_{\beta t}( \tau_0^- <(1-\varepsilon)t)}\to 1.
\eeqlb
It follows from \eqref{eqn3.24} and the asymptotic behavior of $\mathbf{P}_x(  \tau_0^->t )$ that 
\beqnn
\limsup_{t\to\infty}\frac{\mathbf{P}_x( H_0\geq \beta t  )}{\mathbf{P}_x(\tau_0^->t)}=\limsup_{t\to\infty}\frac{\mathbf{P}_x( H_0\geq \beta t  )}{\mathbf{P}_x(\tau_0^->(1-\varepsilon)t)}\cdot \frac{\mathbf{P}_x(\tau_0^->(1-\varepsilon)t)}{\mathbf{P}_x(\tau_0^->t)} \leq (1-\epsilon)^{-\alpha},
\eeqnn
which goes to $1$, by letting $\epsilon\to0$.
Combining this with (\ref{less}), the lemma is proved.
\qed

\begin{lemma}\label{uniest}
	Under Condition \ref{Assumption02}, for any $0<\delta<\beta $,  there exists a constant $C_\delta>0$ such that for any $t>0  $ and $x \in[0, \delta t]$, we have
	$
	\mathbf{P}_x(\tau_0^- >t)+\mathbf{P}_x(H_0>\beta t)\leq C_\delta(1+x)  \overline{\nu}(\beta t).
	$
\end{lemma}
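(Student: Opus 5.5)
Our plan is to reduce the $H_0$ term to the $\tau_0^-$ term and then bound $\mathbf{P}_x(\tau_0^->t)$ uniformly in $x\in[0,\delta t]$.

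\textbf{Reducing $H_0$ to $\tau_0^-$.} The reduction is pathwise. Between jumps $X$ decreases at rate $b$. Hence on $\{\tau_0^->t\}^c\cap\{H_0>\beta t\}$ the process must overshoot level $\beta t$ before time $t$. We would rather use the same splitting as in the proof of Lemma~\ref{Lemma303}. On $\{H_0>\beta t\}$, after $\tau^+_{\beta t}$ the process starts above $\beta t$ and needs time at least $\tau_0^-$ from there. By the strong Markov property,
\[
\mathbf{P}_x(\tau_0^->t/2)\ \ge\ \mathbf{P}_x(H_0>\beta t)\,\inf_{y\ge \beta t}\mathbf{P}_y(\tau_0^->t/2).
\]
For $y\ge\beta t$, the event $\{\tau_0^->t/2\}$ contains $\{\inf_{s\le t/2}(X_s+\beta s)>-\beta t/2\}$ under $\mathbf{P}_0$. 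This probability tends to $1$ by the law of large numbers, so it is bounded below by a positive constant for $t\ge t_0$. For $t\le t_0$ the claimed bound is trivial, since $\overline\nu(\beta t)\ge\overline\nu(\beta t_0)>0$. Because $\overline\nu\in\mathrm{RV}^\infty_{-\theta}$ gives $\overline\nu(\beta t/2)\le C\overline\nu(\beta t)$, it suffices to prove $\mathbf{P}_x(\tau_0^->t)\le C(1+x)\overline\nu(\beta t)$ for $x\le\delta' t$, with $\delta'$ slightly adjusted.

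\textbf{Bounding $\mathbf{P}_x(\tau_0^->t)$.} We would split according to whether a big jump occurs. Set $Z_s=X_s-x+\beta s$, a centred compound-Poisson-type process. On $\{\tau_0^->t\}$ we have $X_t>0$, i.e.\ $Z_t> \beta t-x\ge(\beta-\delta)t=:\eta t$. Moreover, since $\tau_0^->t$, the time already spent above zero forces this deviation. So
\[
\mathbf{P}_x(\tau_0^->t)\le \mathbf{P}\big(Z_t>\eta t\big).
\]
This alone gives $O(t\,\overline\nu(\eta t))$, which loses a factor $t$ relative to $(1+x)$. To recover the factor $1+x$, we would use a renewal/ladder argument instead. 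Write $\mathbf{P}_x(\tau_0^->t)\le \mathbf{P}_x(\tau_0^->t,\ \text{a jump}>\varepsilon t\text{ occurs before }\tau_0^-\wedge t)+\mathbf{P}_x(\tau_0^->t,\ \text{no such jump})$. The first term is at most $\nu[\varepsilon t,\infty)\,\mathbf{E}_x[\tau_0^-]=\overline\nu(\varepsilon t)\,x/\beta$, by the compensation formula and Proposition~17 of \cite{Bertoin1996}. By regular variation this is $\le C x\overline\nu(\beta t)$. For the second term, jumps are truncated at $\varepsilon t$. A standard Fuk--Nagaev or exponential-martingale bound for truncated Lévy processes then gives
\[
\mathbf{P}\big(Z_t>\eta t,\ \text{all jumps}\le\varepsilon t\big)\le \big(C\,t\,\overline\nu(\varepsilon t)\big)^{\eta/(2\varepsilon)}+e^{-ct}.
\]
Choosing $\varepsilon$ small makes this $o(\overline\nu(\beta t))$, uniformly in $x$, because $t\overline\nu(t)\to0$ as $\theta>1$.

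\textbf{Main obstacle.} The crux is the second term, i.e.\ the Fuk--Nagaev-type estimate with the power chosen so that $(t\overline\nu(t))^{k}\ll\overline\nu(t)$. This needs $k(\theta-1)>\theta$. That is achievable by taking $\varepsilon$ small, using Potter bounds to compare $\overline\nu(\varepsilon t)$ with $\overline\nu(\beta t)$. Alternatively, one could cite a uniform version of Theorem~2.2 of \cite{DenisovShneer2013} or Theorem~3.4 of \cite{Xu2021a}, if available. Combining the two terms with the reduction above yields the constant $C_\delta$.
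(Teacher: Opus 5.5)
\textbf{Gap in the $H_0$ reduction.} Your inequality $\mathbf{P}_x(\tau_0^->t/2)\ge \mathbf{P}_x(H_0>\beta t)\inf_{y\ge\beta t}\mathbf{P}_y(\tau_0^->t/2)$ is correct, but the step after it fails on part of the range of $\delta$. To use it you need $\mathbf{P}_x(\tau_0^->s)\le C(1+x)\overline\nu(\beta s)$ with $s=t/2$ and $x\le \delta t=2\delta s$. That is the $\tau_0^-$ estimate with $\delta'=2\delta$. Your $\tau_0^-$ argument needs $\eta=\beta-\delta'>0$, so the reduction only covers $\delta<\beta/2$.

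For $\delta>\beta/2$ the reduction is in fact vacuous. At $x=\delta t$ the process needs time about $\delta t/\beta>t/2$ to reach $0$, so $\mathbf{P}_{\delta t}(\tau_0^->t/2)\to 1$. The target bound, however, is $O(t\,\overline\nu(\beta t))\to0$. Doubling $\delta$ is therefore not a ``slight adjustment'', and the lemma is asserted for every $\delta\in(0,\beta)$.

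The fix is to run the same strong Markov argument at time $(1-\varepsilon)t$ instead of $t/2$, with $\delta/(1-\varepsilon)<\beta$. Then
$\inf_{y\ge\beta t}\mathbf{P}_y(\tau_0^->(1-\varepsilon)t)\ge \mathbf{P}\big(\inf_{s\le(1-\varepsilon)t}(X_s+\beta s)>-\varepsilon\beta t\big)\to1$,
which is exactly the paper's \eqref{eqn3.24}. Using $\overline\nu(\beta(1-\varepsilon)t)\le C\,\overline\nu(\beta t)$ then completes the reduction.

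\textbf{Comparison of the $\tau_0^-$ bound.} With that repair your proof works, and your bound for $\mathbf{P}_x(\tau_0^->t)$ takes a genuinely different route from the paper.

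The paper uses downward creeping to write $\tau^-_{-x}\le\sum_{i=1}^{[x]+1}\tau^-_{-1,i}$ with i.i.d.\ summands. It centres them at $d\,\mathbf{E}[\tau_{-1}^-]$ with $d\delta/\beta<1$. It then applies the uniform bound $\mathbf{P}(S_n>y)\le Cn\,\mathbf{P}(\xi>y)$ for negatively drifted heavy-tailed sums (Theorem~2 of \cite{DenisovFossKorshunov2010}), together with the tail $\mathbf{P}(\tau_{-1}^->t)\sim\overline\nu(\beta t)/\beta$ from Lemma~\ref{Lemma303}. In that approach the factor $1+x$ is simply the number of summands.

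You instead get the factor $x$ from $\mathbf{E}_x[\tau_0^-]=x/\beta$, via the compensation formula applied to the event that a jump larger than $\varepsilon t$ occurs. You handle the event with no such jump by a Fuk--Nagaev (exponential Chebyshev) bound for the truncated process, and that bound is independent of $x$.

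Your route is more self-contained. It needs neither the tail asymptotics of $\tau_{-1}^-$ (and hence not \cite{DenisovShneer2013}) nor the random-walk uniform bound. The price is that you must actually carry out the truncated exponential-moment estimate, choosing $\varepsilon$ small enough that the exponent beats $\theta$. The paper's route is shorter given its citations.
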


\proof 
Let $\{\tau_{-1,i}^{-}\}_{i \geq 1}$ be a sequence of i.i.d. copies of $\tau_{-1}^{-}$. 
By the strong Markov property of $X$, it holds that 
$
\mathbf{P}_x\big(\tau_{0}^{-} > t\big) 
=\mathbf{P}\big(\tau_{-x}^{-} > t\big)   
\leq \mathbf{P}\big(\sum_{i=1}^{[x]+1} \tau_{-1,i}^{-}>t\big)  . 
$
For any constant $d>1$ such that $d \cdot \delta \cdot \mathbf{E}[\tau_{-1}^{-}]<1$, we have for all $t>0$ and $x \in[0, \delta t]$, 
\beqnn
\mathbf{P}\Big(\sum_{i=1}^{[x]+1} \tau_{-1,i}^{-} >t\Big) 
\ar=\ar \mathbf{P}\Big(\sum_{i=1}^{[x]+1}\big(\tau_{-1,i}^{-}-\gamma \cdot  \mathbf{E}[\tau_{-1}^{-}]\big) > t- ([x]+1) \cdot \gamma \cdot  \mathbf{E}[\tau_{-1}^{-}]  \Big) \cr
\ar\leq\ar  \mathbf{P}\Big(\sum_{i=1}^{[x]+1}\big(\tau_{-1,i}^{-}- d \cdot  \mathbf{E}[\tau_{-1}^{-}]\big) > t- (\delta t+1) \cdot \gamma \cdot  \mathbf{E}[\tau_{-1}^{-}]  \Big).
\eeqnn 
Since as $t\to \infty $, $t- (\delta t+1) \cdot d \cdot  \mathbf{E}[\tau_{-1}^{-}] \sim \big(1-  d\cdot \delta\cdot  \mathbf{E}[\tau_{-1}^{-}] \big)\cdot t \to \infty $, 
by Theorem 2 in \cite{DenisovFossKorshunov2010} there exists a constant $C>0$ such that for large $t$, the last probability is bounded uniformly in $x \geq 0$  by 
$
C (x+1) \cdot \mathbf{P}\big(\tau_{-1}^{-} \geq \big(1- d \cdot \delta \cdot  \mathbf{E}[\tau_{-1}^{-}] \big)\cdot t \big)  ,
$
which, by Lemma \ref{Lemma303} (1) and Condition~\ref{Assumption02}, is asymptotically equivalent as $t\to\infty$ to 
\beqnn
\frac{C}{\beta} (x+1) \cdot \overline{\nu}\big( \big(1- \delta \cdot \theta\cdot  \mathbf{E}[\tau_{-1}^{-}] \big)\cdot t \big)
\sim  \frac{C}{\beta} (x+1) \cdot  \Big(\frac{1-\delta d /\beta }{\beta}\Big)^{-\theta} \cdot \overline{\nu}(\beta t).
\eeqnn
Consequently, we get the  uniform upper bound for $\tau^{-}_{0}$.
Combining this with the uniform estimate \eqref{eqn3.24}, we deduce that  the same uniform upper bound for $H_0$ also holds.
\qed

\subsubsection{Proof of Theorem \ref{Mainresult005}} 
From  \eqref{aa2.9} and  Lemma~\ref{lem3.2}, we have for any $\delta\in (0,\beta)$,
\beqnn
\mathbf{P}(  \tau_0^{+}>t| \tau_0^{+}<\infty)=\frac{1}{b-\beta}\int_{0}^{\infty}\mathbf{P}_{x}(  \tau_{0}^->t )\overline{\nu}(x)dx=\frac{1}{b-\beta}\sum_{i=1}^{3}J_i^{\delta},
\eeqnn
where 
$J_1^{\delta}=\int_{0}^{(\beta-\delta)t}\mathbf{P}_{x}(  \tau_0^->t )\overline{\nu}(x)dx$, $J_2^{\delta}=\int_{(\beta-\delta)t}^{(\beta+\delta)t}\mathbf{P}_{x}(  \tau_{0}^->t )\overline{\nu}(x)dx$ and $J_3^{\delta}=\int_{(\beta+\delta)t}^{\infty}\mathbf{P}_{x}(  \tau_{0}^->t )\overline{\nu}(x)dx.$
From  Lemma~\ref{uniest},  we can use the dominated convergence theorem to $J_1^{\delta}$  and then apply the  Lemma~\ref{Lemma303}  to obtain that   
$
J_1^{\delta}= o\big(\beta t\overline{\nu}(\beta t)\big),
$
as $t\to\infty$.
For $J_2^{\delta}$, it can be bounded by $\int_{(\beta-\delta)t}^{(\beta+\delta)t}\overline{\nu}(x)dx$.  Hence， from  \eqref{eqn2.15},  we obtain 
\beqnn
\lim_{\delta\to 0+}\limsup_{t\to\infty}\frac{J_2^{\delta}}{\beta t\overline{\nu}(\beta t)} \leq \lim_{\delta\to 0+}\frac{1}{\theta-1} \big[\big(1-\delta/\beta\big)^{1-\theta}-\big(1+\delta/\beta\big)^{1-\theta}\big]=0.
\eeqnn
It remains to consider  $J_3^{\delta}$.
Note that
$
\mathbf{P}_{(\beta+\delta)t}(\tau_0^->t)\cdot\int_{(\beta+\delta)t}^{\infty}\overline{\nu}(x)dx\leq J_3^{\delta}\leq\int_{(\beta+\delta)t}^{\infty}\overline{\nu}(x)dx.
$
From \eqref{3.84}, we know that as $t\to \infty$, $\mathbf{P}_{(\beta+\delta)t}(\tau_0^->t)\to 1$. Therefore, it follows from  \eqref{eqn2.15} that as $h\to\infty $ and then $\delta \to 0+$, $J_3^{\delta}\sim \beta t\overline{\nu}(\beta t)/(\theta-1)$.
Putting these estimates together, we see the  first claim holds. We now turn to prove the second claim. 
From Lemma \ref{lem3.2} and \eqref{aa2.9}, we have 
\beqnn
\mathbf{P}\big(H_0>h| \tau_0^{+}<\infty\big)
= \frac{1}{b-\beta} \int_{0}^{\infty}\mathbf{P}_{x}(H_0>h)\overline{\nu}(x)dx,
\eeqnn
where the above integral  can be decomposed into:
$\int_0^h \mathbf{P}_{x}(  H_0>h  ) \overline{\nu}(x)dx$ and  $\int_h^{\infty} \mathbf{P}_{x}(  H_0>h ) \overline{\nu}(x)dx.
$
For the first term, it follows from \eqref{eqn3.24} that for any $\epsilon>0$,
\beqnn
\limsup_{h\to\infty}\frac{1}{h\overline{\nu}(h)}\int_0^h \mathbf{P}_{x}(  H_0>h )\overline{\nu}(x)dx \ar\leq\ar \limsup_{t\to\infty}\frac{1}{\beta t\overline{\nu}(\beta t)}\int_0^{\beta t}\mathbf{P}_{x}(  \tau_0^->(1-\varepsilon )t )\overline{\nu}(x)dx\\
\ar=\ar \limsup_{t\to\infty}\int_0^{(1-\varepsilon)^{-1}\beta t}\frac{\mathbf{P}_{x}(  \tau_0^->t )}{(1-\varepsilon)^{-1}\beta t\, \overline{\nu}((1-\varepsilon)^{-1}\beta t)}\overline{\nu}(x)dx.
\eeqnn
As in the proof of the first claim, the latter limit vanishes, as $\varepsilon\to 0$.  Hence   
$
\int_0^h \mathbf{P}_{x}(  H_0>h  ) \overline{\nu}(x)dx=o(h\overline{\nu}(h)),
$
as $h\to\infty$ and then $\varepsilon\to 0$.
For the second term, using \eqref{eqn2.15} again, we immediately obtain that  as $h\to\infty$,
$
\int_h^{\infty} \mathbf{P}_{x}(  H_0>h ) \overline{\nu}(x)dx=\int_h^{\infty}  \overline{\nu}(x)dx\sim h\overline{\nu}(h)/(\theta-1).
$
Combining these two results together, we have 
proved the desired result.\qed
\subsubsection{Proof of Theorem \ref{MainThm02}}
\noindent\textbf{Proof of Theorem \ref{MainThm02}(1).}
By Lemma \ref{lem3.2}, we obtain
\beqlb\label{eqn3.4}
\mathbf{P}\big(T_0-\tau_{0}^{+}>at \,|\, \tau_{0}^{+}>t, \tau_0^{+}<\infty\big)
=\frac{ \int_{0}^{\infty}\mathbf{P}_{x}(\tau_{0}^{-}>at)dx\int_{0}^{\infty}\mathbf{P}_{y}(\tau_{0}^{-}>t)\nu(dy+x)}{\int_{0}^{\infty}\mathbf{P}_{y}(\tau_{0}^{-}>t)\overline{\nu}(y)dy}.
\eeqlb
On the one hand, from the proof of Theorem \ref{Mainresult005} (1),  as $t\to\infty$,
\beqlb\label{a7}
\int_{0}^{\infty}\mathbf{P}_{y}(\tau_{0}^{-}>t)\overline{\nu}(y)dy\sim \frac{\beta t\overline{\nu}(\beta t)}{\theta-1}.
\eeqlb
On the other hand, the numerator of \eqref{eqn3.4} can be decomposed into the next two terms
\beqlb\label{eqn3.6}
\iint_{H_1}\mathbf{P}_{x}(\tau_{0}^{-}>at)\mathbf{P}_{y}(\tau_{0}^{-}>t)\nu(dy+x)dx \quad\text{and}\quad \iint_{H_2}\mathbf{P}_{x}(\tau_{0}^{-}>at)\mathbf{P}_{y}(\tau_{0}^{-}>t)\nu(dy+x)dx 
\eeqlb
where with $\delta\in (0,\beta)$,
$
H_1:=\big\{(x,y)\, |\,x>(\beta+\delta)at,  \   y>(\beta+\delta)t \big\}\ \text{and}\ 
H_2:=\mathbb{R}_{+}\times\mathbb{R}_{+}/H_{1}.
$
The  first term can be bounded below and above  by 
\beqnn  \mathbf{P}_{(\beta+\delta)at}(\tau_{0}^{-}>at)\cdot \mathbf{P}_{(\beta+\delta)t}(\tau_{0}^{-}>t)\cdot\int_{(\beta+\delta)(1+a)t}^{\infty}\overline{\nu}(x)dx\quad\text{and}\quad   \int_{(\beta+\delta)(1+a)t}^{\infty}\overline{\nu}(x)dx,
\eeqnn
respectively.
From \eqref{3.84}, we know that as $t\to \infty$, $\mathbf{P}_{(\beta+\delta)at}(\tau_{0}^{-}>at)\cdot \mathbf{P}_{(\beta+\delta)t}(\tau_{0}^{-}>t)\to1 $. Hence, by  \eqref{eqn2.15} that 
\beqlb\label{eqn3.7}
\lefteqn{\lim_{\delta\to 0+}\lim_{t\to\infty}\frac{ 1}{\beta t\overline{\nu}(\beta t)} \iint_{H_1}\mathbf{P}_{x}(\tau_{0}^{-}>at)\mathbf{P}_{y}(\tau_{0}^{-}>t)\nu(dy+x)dx}\ar\ar\cr\cr
\ar=\ar \lim_{\delta\to 0+}\lim_{t\to\infty}\frac{ 1}{\beta t\overline{\nu}(\beta t)}\int_{(\beta+\delta)(1+a)t}^{\infty}\overline{\nu}(x)dx=\frac{ (1+a)^{-\theta+1}}{\theta-1}.
\eeqlb
We now consider the second term in \eqref{eqn3.6}. Observe that
$H_2\subset  H_{21}\cup H_{22},$
where 
\beqnn
\ar\ar H_{21}:=\big\{(x,y)\in\big(0,(\beta-\delta)a t\big)\times \mathbb{R}_{+}\  \text{or}\   (x,y)\in\mathbb{R}_{+}\times \big(0,(\beta-\delta)t\big)  \big\}\quad\text{and}\cr\cr
\ar\ar H_{22}:=\big\{(x,y)\in \big((\beta-\delta)a t,(\beta+\delta)a t\big)\times \mathbb{R}_{+} \  \text{or}\   (x,y)\in\mathbb{R}_{+}\times \big((\beta-\delta)t,(\beta+\delta)t\big)  \big\}.
\eeqnn
From Lemma \ref{uniest} (i), we obtain  
\beqnn
\lefteqn{\limsup_{t\to\infty}\frac{1}{\beta t\overline{\nu}(\beta t)}  \iint_{H_{21}}\mathbf{P}_{x}(\tau_{0}^{-}>at)\mathbf{P}_{y}(\tau_{0}^{-}>t)\nu(dy+x)dx}\ar\ar\cr\cr
\ar\leq\ar
C\lim_{t\to\infty}\Big(\int_{0}^{\infty} \mathbf{P}_{y}(\tau_{0}^{-}>t)\,  \overline{\nu}(y)dy+\int_{0}^{\infty} \mathbf{P}_{x}(\tau_{0}^{-}>at\big)\,  \overline{\nu}(x)dx\Big)=0.
\eeqnn
Furthermore, by Proposition 1.5.10 of \cite[p.27]{BinghamGoldieTeugels1987},
\beqnn
\lefteqn{\lim_{\delta\to0+}\limsup_{t\to\infty}\frac{1}{\beta t\overline{\nu}(\beta t)}\iint_{H_{22}}\mathbf{P}_x(\tau_{0}^{-}>at)\cdot \mathbf{P}_y(\tau_{0}^{-}>  t)\,\nu(dy+x)dx}\ar\ar\cr\cr
\ar\leq\ar \lim_{\delta\to0+}\lim_{t\to\infty}\frac{1}{\beta t\overline{\nu}(\beta t)} \Big(\int_{(\beta-\delta)a t}^{(\beta+\delta)a t} \overline{\nu}(x)dx+\int_{(\beta-\delta)t}^{(\beta+\delta)t}  \overline{\nu}(y)dy \Big)=0.
\eeqnn
These two estimates imply that the  second term in \eqref{eqn3.6} is $o(\beta t \cdot \overline{\nu}(\beta t))$ as $t\to\infty$ and then $\delta\to 0+$.
Combining this with \eqref{a7}, \eqref{eqn3.7} and substituting into \eqref{eqn3.4} yields the desired result.
\qed

\noindent\textbf{Proof of Theorem \ref{MainThm02}(2).}
Here we  mainly prove the first conditional probability  and  the equality  follows directly from Lemma \ref{lem3.2}. 
By Lemma \ref{lem3.2}, 
\beqlb\label{eqn3.10}
\mathbf{P}\big(\widehat{H}_0>ah \,|\, H_0>h,\tau_{0}^{+}<\infty\big)
=\frac{   \int_{0}^{\infty}\mathbf{P}_x(H_0>ah)dx\int_{0}^{\infty}\mathbf{P}_{y}(H_0>h)\nu(dy+x)}{\int_{0}^{\infty}\mathbf{P}_{y}(H_0>h)\overline{\nu}(y)dy}.
\eeqlb
From  the proof of Theorem \ref{Mainresult005} (2) that  as $h\to\infty$,  
\beqnn
\int_{0}^{\infty}\mathbf{P}_{y}(H_0>h)\overline{\nu}(y)dy\sim \frac{h\overline{\nu}(h)}{\theta-1}.
\eeqnn
Arguing as in the proof of the first claim, we similarly obtain that as $h\to\infty$,
\beqnn
\int_{0}^{\infty}\mathbf{P}_x(H_0>ah)dx\int_{0}^{\infty}\mathbf{P}_{y}(H_0>h)\nu(dy+x)\sim \frac{(1+a)^{1-\theta}}{\theta-1}\cdot h\overline{\nu}(h).
\eeqnn
Substituting these results into \eqref{eqn3.10} completes the proof.
\qed

\bigskip
\textbf{Acknowledgment.} The authors would like to thank Wei Xu for his enlightening comments.

 \bibliographystyle{plain}
 
 \bibliography{Reference}

 \end{document}